\renewcommand{\P}{\mathbb{P}}
\newcommand{\g}[2]{g^{#1}_{#2}}
\DeclareMathOperator{\Pic}{Pic}
\newcommand{\floor}[1]{\left\lfloor #1 \right\rfloor}
\newcommand{\OO}{\mathscr{O}}
\newcommand{\inv}{^{-1}}
\newtheoremstyle{thmbld}{\topsep}{\topsep}{}{}{\itshape}{}{0.5em}{}
\newtheorem{theorem}{Theorem}[section]
\newtheorem{lemma}[theorem]{Lemma}
\newtheorem{prop}[theorem]{Proposition}
\newtheorem{cor}[theorem]{Corollary}
\newtheorem{conj}[theorem]{Conjecture}
\theoremstyle{definition}
\newtheorem{defn}[theorem]{Definition}
\theoremstyle{definition}
\newtheorem{remark}[theorem]{Remark}
\newtheorem{theoremintro}{Theorem}
\newtheorem{questionintro}{Question}
\newcolumntype{A}{w{c}{0.7cm}}
\title{Curves on Brill--Noether special K3 surfaces} 
\author{Richard Haburcak}
\address{Department of Mathematics\\%
		Dartmouth College\\%
		Kemeny Hall\\%
		Hanover, NH 03755}
\email{richard.haburcak.gr@dartmouth.edu}
\begin{document}

\begin{abstract}
Mukai showed that projective models of Brill--Noether general
polarized K3 surfaces of genus $6-10$ and $12$ are obtained as linear
sections of projective homogeneous varieties, and that their
hyperplane sections are Brill--Noether general curves. In general, the question, raised by Knutsen, and attributed to Mukai,
of whether the Brill--Noether generality of any polarized K3 surface
$(S,H)$ is equivalent to the Brill--Noether generality of smooth
curves $C$ in the linear system $|H|$, is still open.  Using
Lazarsfeld--Mukai bundle techniques, we answer this question in the
affirmative for polarized K3 surfaces of genus $\leq 19$, which
provides a new and unified proof even in the genera where Mukai models
exist.
\end{abstract}
\vspace*{-1cm}

\maketitle

\vspace*{-0.35cm}
\section*{Introduction}

The Brill--Noether theory of algebraic curves has its roots in the
19th century study of the projective embeddings induced by regular
functions on algebraic curves. The Brill--Noether number,
$\rho(g,r,d)=g-(r+1)(g-d+r)$ is the expected dimension of the space
$W^r_d(C)$, which parameterizes line bundles of type $\g{r}{d}$,
i.e. of degree $d$ and with at least $r+1$ linearly independent global
sections. In the 1970s and 1980s, Donagi, Eisenbud, Green, Harris, Lazarsfeld, Martens, Morrison,
Mumford,  Reid, Saint-Donat, and Tyurin \cite{DonagiMorrison_linsystemsonk3,eisenbud_harris,GreenLaz,harris,harris_mumford,Martens,Reid1976,Saint_Donat_Proj_Models_of_K3s,Tyurin_1987} studied the Brill--Noether
theory of curves on a K3 surface.  In the early 21st century, Mukai
introduced a Brill--Noether theory for polarized K3 surfaces, and
showed that in genus $g \leq 10$ and $12$, the Brill--Noether general
polarized K3 surfaces are precisely those admitting a so-called
``Mukai model'' in a projective homogeneous variety.  While much of
this work implicitly circles around the interplay between the
Brill--Noether theory of polarized K3 surfaces and the Brill--Noether
theory of its hyperplane section curves, the precise relationship
between these two notions, which is linked via the theory of
Lazarsfeld--Mukai bundles, is still somewhat mysterious.

A smooth projective curve $C$ admiting a $\g{r}{d}$ with $\rho(g,r,d)<0$ is called \emph{Brill--Noether special}. Following Mukai~\cite[Definition~3.8]{mukai:Fano_threefolds}, we say
that a polarized K3 surface $(S,H)$ of genus $g$ is \emph{Brill--Noether special} if there exists a nontrivial line bundle $L\neq H$ such that
\[
h^0(S,L)\, h^0(S,H-L)\ge h^0(S,H)=g+1.
\] 
If $(S,H)$ is Brill--Noether
special, then a smooth curve $C\in|H|$ admits a Brill--Noether special
line bundle, simply by restricting $L$ to $C$. The converse is an open
question, stated by
Knutsen~\cite[Remark~10.2]{Knutsen_k3_models_in_scrolls},\cite[Remark~2.2]{knutsen2013},
and attributed to Mukai.

\begin{questionintro}
	\label{Mukai Conj}
	Let $(S,H)$ be a polarized K3 surface, and $C \in |H|$ a smooth
	irreducible curve. If $C$ is Brill--Noether special, then is $(S,H)$
	is Brill--Noether special?
\end{questionintro}

If both $L$ and $H-L$ are globally generated, this is equivalent to
$\rho(g,r,d)<0$, where $L^2 = 2r - 2$ and $d=H.L$.  Indeed, by Greer,
Li, and Tian~\cite[Lemma~1.6]{greer-li-tian2014}, the Brill--Noether
special locus in the moduli space $\mathcal{K}_g$ of quasi-polarized K3
surfaces of genus $g$ is a union of Noether--Lefschetz loci, which are
parameterized by Brill--Noether special lattice markings, see
\Cref{Section BN spec k3s background}. 
	
For K3 surfaces admitting Mukai models, one can verify by hand that
\Cref{Mukai Conj} has a positive answer, cf.\
\cite[Theorem~1.3]{auel2020brillnoether},
\cite[Remark~1.4]{hoff_stagliano}.  A celebrated result of
Lazarsfeld~\cite{lazarsfeld:Brill-Noether_without_degenerations} shows
that if $H$ generates the Picard group of $S$, whence $(S,H)$ is
Brill--Noether general, then any smooth curve in the linear system of
$H$ is Brill--Noether general.  Hence \Cref{Mukai Conj} has a positive
answer for primitively polarized K3 surfaces of Picard rank $1$.
Lazarsfeld's result also holds for certain classes of polarized K3
surfaces of Picard rank 2, the so-called \emph{Knutsen K3
surfaces} of~\cite{ArapMarshburn_BN_general_curves_on_knutsen_k3s}. However, the
question remains open for general polarized K3 surfaces of Picard rank
$> 1$ and genus $> 12$.  In this work, we use new techniques to expand
the genera for which the question has a positive answer.

\begin{theoremintro}
	\Cref{Mukai Conj} has a positive answer for $g \le 19$.
\end{theoremintro}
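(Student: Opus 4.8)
The plan is to push a Brill--Noether special linear series on $C$ through the Lazarsfeld--Mukai construction and to read off, from the failure of the resulting bundle to be simple, a line bundle on $S$ realizing the speciality of $(S,H)$; by the reduction of Greer--Li--Tian recalled above it suffices to exhibit a class $M\in\Pic(S)$ with $M$ and $H-M$ effective and $h^0(S,M)\,h^0(S,H-M)\ge g+1$. As $C$ is Brill--Noether special it carries a $\g{r}{d}$ with $\rho(g,r,d)<0$; after subtracting base points and completing, and after possibly replacing $A$ by its Serre dual $K_C-A$ (which has the same $\rho$), I may assume $A$ is a complete, globally generated $\g{r}{d}$ with $r\ge 1$, $h^1(A)\ge 1$, and $\rho(g,r,d)<0$, chosen as special as possible. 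I then form the Lazarsfeld--Mukai bundle $E=E_{C,A}$, the dual of the kernel of the evaluation $H^0(C,A)\otimes\OO_S\to A$ (with $A$ viewed on $S$ via $C\hookrightarrow S$). Then $E$ is locally free of rank $r+1$ with $\det E\isom H$, $c_2(E)=d$, $h^1(E)=h^2(E)=0$ and $h^0(E)=h^0(A)+h^1(A)$, and a direct Riemann--Roch computation gives $\langle v,v\rangle=2\rho(g,r,d)-2$ for its Mukai vector $v$.

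The first genuine step is to rule out simplicity. If $E$ were simple, Mukai's formula would give $\dim\operatorname{Ext}^1(E,E)=\langle v,v\rangle+2=2\rho<0$, which is absurd; hence $E$ is not simple, and in particular not $H$-stable for any polarization. From a non-scalar endomorphism I would pass to a saturated destabilizing subsheaf $F\subset E$ with torsion-free quotient and set $M:=\det F$, so that $\det(E/F)\isom H-M$. When $r=1$ this is literally a sequence $0\to M\to E\to N\otimes I_Z\to 0$ with $N\isom H-M$ and $M.N=d-\operatorname{length}(Z)$, and the sections of $E$, which come from $H^0(A)\oplus H^0(K_C-A)$, descend to give $h^0(M)\ge 2$ and $h^0(N)\ge 2$; more generally a Donagi--Morrison-type analysis of the non-simple bundle should produce a line bundle $M$ on $S$ whose restriction contains $A$, together with the same lower bounds on $h^0(M)$ and $h^0(H-M)$.

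With such an $M$ in hand, writing $M^2=2p-2$, $(H-M)^2=2q-2$ and $M.(H-M)=m$, one has $p+q+m=g+1$, and, once $M$ and $H-M$ are shown to be effective with nonnegative self-intersection, $h^0(M)\ge p+1$ and $h^0(H-M)\ge q+1$ by Riemann--Roch on $S$. The desired speciality $h^0(M)\,h^0(H-M)\ge g+1$ is then equivalent to the clean numerical inequality $m\le pq+1$. The Hodge index theorem on the lattice spanned by $H$ and $M$ bounds $m$ from \emph{below}, so the entire content is an \emph{upper} bound on $m$ --- equivalently a bound on $\operatorname{length}(Z)$ and on the twist of the destabilizing subsheaf --- forced by $c_2(E)=d$ together with $\rho<0$.

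I expect the main obstacle to lie exactly here, and to be the source of the hypothesis $g\le 19$. For $r\ge 2$ the destabilizing subsheaf need not be a line bundle, so one must either induct on the rank, peeling off Jordan--Hölder factors and re-running the argument on a lower-rank Lazarsfeld--Mukai bundle, or else bound uniformly the discrete invariants of all admissible factors. In either case the problem becomes finite: classify the Mukai vectors $(r+1,H,s)$ with $\langle v,v\rangle=2\rho-2<-2$ that can occur in genus $g$, and for each verify $m\le pq+1$ for the induced $M$. This bookkeeping should close for the finitely many admissible $(r,d)$ when $g\le 19$, while for larger $g$ new configurations of factors (or subsheaves with larger $c_2$-defect) arise that the method does not control. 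Finally I would treat separately the degenerate case in which $M$ is numerically proportional to $H$, so that $H$ and $M$ fail to span a rank-two lattice; these correspond to elementary (e.g.\ hyperelliptic or primitive) situations that can be handled directly.
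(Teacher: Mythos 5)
Your skeleton is the right one --- form $E_{C,A}$, observe $\langle v,v\rangle=2\rho-2<-2$ so $E$ is not simple, extract a destabilizing subobject, take determinants, and verify a numerical inequality equivalent to $h^0(M)\,h^0(H-M)\ge g+1$ --- and your reduction of that inequality to $m\le pq+1$ is equivalent to the paper's verification that $\rho(g,r',d')<0$ for the type $\g{r'}{d'}$ of the lift. But the step you defer as ``bookkeeping'' is precisely where the theorem lives, and as you have set it up it does not close. The passage from ``$E$ is non-simple'' to ``there is a saturated \emph{line} subbundle $N\subset E$ with $h^0(S,N)\ge 2$ and stable quotient, whose determinant restricts to contain $|A|$ with controlled Clifford index'' is the (Strong) Donagi--Morrison conjecture, which is \emph{false} in general: Knutsen and Lelli-Chiesa construct counterexamples (\cite[Appendix A]{Lelli_Chiesa_2015}), so no amount of Jordan--H\"older peeling or classification of Mukai vectors $(r+1,H,s)$ will make the argument run unconditionally. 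What actually makes it work is: (i) a reduction to the case $\gamma(A)>\gamma(C)=\lfloor\frac{g-1}{2}\rfloor$ (``non-computing'' series), since when $\gamma(A)=\gamma(C)$ Lelli-Chiesa's Theorem 4.2 and Knutsen's lemma already produce a genuine lift for every rank; (ii) the finite list of non-computing types in genus $14$--$19$, all of which have $r\le 4$; (iii) the \emph{bounded} lifting theorems for $r=2$ (Lelli-Chiesa) and $r=3$ (Auel--Haburcak), whose explicit degree bounds must be checked against that list; and (iv) a separate trick for the $\g{4}{17}$ and $\g{4}{18}$ in genus $18$, $19$, namely subtracting a non-basepoint to get a Brill--Noether special $\g{3}{d-1}$ and lifting that instead. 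None of (i)--(iv) appears in your write-up, and without them the ``finite check'' you invoke has no mechanism behind it.

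Relatedly, you do not identify why $g\le 19$ is the cutoff. It is not that ``new configurations of factors arise'': it is that at $g=20$ one first meets Brill--Noether special $\g{4}{d}$ for which $\rho(g,3,d-1)\ge 0$ (expected maximal rank-$4$ loci), so the reduction (iv) to a rank-$3$ series fails and no lifting result for $r=4$ exists. Finally, even granting a stable quotient $E/N$, your inequality $m\le pq+1$ still requires proof; the paper derives it from $\langle v(E/N),v(E/N)\rangle\ge -2$ for the stable quotient, which forces $r'\ge r$, $\gamma(M)\le\gamma(A)$, and then $\rho(g,r',d')\le\rho(g,r,d)<0$. You correctly locate this as the remaining content but leave it as an expectation rather than an argument.
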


Our approach is to take a Brill--Noether special $A\in\Pic(C)$ and
``lift'' it to line bundle $L$ on $S$, which should give a
Brill--Noether special marking on $\Pic(S)$. The proof would be
simplified if we could actually find $L$ such that $L\vert_C\cong
A$. However, this is not always the case. In general, we work with a
weaker notion of lift, a \emph{Donagi--Morrison lift}, whose existence
was conjectured by Donagi and Morrison, see \Cref{conj DM}. The
Donagi--Morrison conjecture, however, does not hold in general, and we
state a bounded version \Cref{conj Bounded Strong DM} that holds in
the range of genera we consider due to results of
Lelli-Chiesa~\cite{Lelli_Chiesa_2015} and our recent results~\cite{haburcak_2022}.

We remark that in genus $20$ and above, current techniques for lifting
line bundles are insufficient for \Cref{Mukai Conj}. In particular,
one would need to lift linear systems of rank $4$. Already in genus
$18$ and $19$, we encounter this issue, but can work around it by
finding a different linear system with rank $3$ that we can lift; this
trick, however, is no longer available in higher genus where Brill--Noether loci corresponding to linear systems of rank $4$ are \emph{expected maximal}, see \Cref{remark obtaining bp free g^3_d}.

Geometrically, we can think about \Cref{Mukai Conj} in the following
way. As in \cite{arbarello_bruno_sernesi}, we consider 
$$
\xymatrix@R=7pt@C=16pt{
 & \mathcal{P}_g \ar[rd]^(0.45){\pi} \ar[ld]_(0.45){\phi} & \\
\mathcal{M}_g & & \mathcal{K}_g
}
$$
where $\pi : \mathcal{P}_g \to \mathcal{K}_g$ is the universal smooth
hyperplane section, whose fiber above a polarized K3 surface $(S,H)$
of genus $g$ is the set of smooth and irreducible curves in the linear
system $|H|$ (it is an open substack of a $\P^g$-bundle over
$\mathcal{K}_g$), and $\phi : \mathcal{P}_g \to \mathcal{M}_g$ is the
forgetful map to the moduli space of smooth curves of genus~$g$.  In
$\mathcal{P}_g$, we want to compare the preimages of the
Brill--Noether special loci $\mathcal{K}_g^\text{BN} \subset
\mathcal{K}_g$ and $\mathcal{M}_g^\text{BN} \subset \mathcal{M}_g$.
We know that $\pi\inv \mathcal{K}_g^\text{BN} \subseteq \phi\inv
\mathcal{M}_g^\text{BN}$ and \Cref{Mukai Conj} asks whether the reverse containment also holds.

\subsection*{Outline}
In \Cref{Section BN spec k3s background}, we recall and clarify the
identification of Brill--Noether special polarized K3 surfaces via
lattice polarizations of their Picard group, following work of Greer,
Li, and Tian~\cite{greer-li-tian2014}. In \Cref{Section lifting line
bundles background}, we recall the Donagi--Morrison conjecture
\Cref{conj DM} on lifting Brill--Noether special line bundles. In
\Cref{Section Lazarsfeld--Mukai Bundles and Lifting and Generalized LM
Bundles}, we summarize how Lazarsfeld--Mukai bundles are used to prove
cases of \Cref{conj DM}. We also state a stronger conjecture on the
maximal destabilizing subsheaf of a Lazarsfeld--Mukai bundle
\Cref{conj Strong DM}, and a bounded version \Cref{conj Bounded Strong
DM} of the Donagi--Morrison conjecture, which holds in the genera we
consider. In \Cref{section DM lifts from quotients} we give useful numerical bounds on Donagi--Morrison
lifts obtained from Lazarsfeld--Mukai bundle techniques. We also show in \Cref{subsection strong DM implies Mukai} that \Cref{conj Strong DM} implies a positive answer to \Cref{Mukai Conj}, see \Cref{Theorem strong DM implies Mukai}. Finally, in
\Cref{Section BN special k3s}, we prove our main result \Cref{Theorem
bn special k3s in genus 14-19}.

\subsection*{Acknowledgments} 
We would like to thank Asher Auel, Andreas Knutsen, and Margherita
Lelli-Chiesa for helpful discussions as well as an explanation of the
history of \Cref{Mukai Conj}, including relaying personal
communications with Shigeru Mukai.  This work represents part of the
author's PhD thesis \cite{haburcak:thesis} at Dartmouth College, and is based upon work partially supported by the National Science Foundation under Grant No. DMS-2200845 and supported by a grant from the Simons Foundation (712097, to Asher Auel).

\section{Brill--Noether special K3 surfaces}\label{Section BN spec k3s background}

Throughout the paper, we let $(S,H)$ be a polarized K3 surface of genus $g\ge 2$, and $C\in|H|$ a smooth irreducible curve of genus $g$.

\begin{defn}[{\cite[Definition~3.8]{mukai:Fano_threefolds}}]
A polarized K3 surface $(S,H)$ of genus $g$ is \emph{Brill--Noether
special} if there exists a nontrivial line bundle $L\neq H$ such that
\[
h^0(S,L)\, h^0(S,H-L)\ge h^0(S,H)=g+1.
\] 
In this case, we call the sublattice of $\Pic(S)$ generated by $H$ and $L$, denoted by $\langle H, L \rangle$, a
\emph{Brill--Noether special marking} of $(S,H)$.
\end{defn}

A natural question is to determine the markings that a Brill--Noether
special polarized K3 surface admits. In the moduli space
$\mathcal{K}^\circ_g$ of polarized K3 surfaces of genus $g$, the
Noether--Lefschetz (NL) locus parameterizes K3 surfaces with Picard
rank $>1$. By Hodge theory, the NL locus is a union of countably many
irreducible divisors, which we call NL divisors. In
\cite{greer-li-tian2014}, Greer, Li, and Tian study the Picard group
of $\mathcal{K}^\circ_g$ via the NL divisors as well as the locus of
Brill--Noether special K3 surfaces in $\mathcal{K}^\circ_g$, which
they identify as a certain union of NL divisors. More generally, it is
convenient to work with the moduli space of primitively
quasi-polarized K3 surfaces, denoted $\mathcal{K}_g$ where
$\mathcal{K}_g\setminus \mathcal{K}^\circ_g$ is a divisor
parameterizing K3 surfaces containing a $(-2)$-exceptional curve. For
$g \ge 2$ and $d, r \ge 0$, we define the NL divisor
$\mathcal{K}^r_{g,d}$ to be the locus of polarized K3 surfaces
$(S,H)\in\mathcal{K}_g$ such that
\[
\Lambda^r_{g,d}=
\begin{array}{c|cc} \multicolumn{1}{c}{}
  & H    & L    \\\cline{2-3}
H & 2g-2 &d     \\
L & d    & 2r-2
\end{array}
\] 
admits a primitive embedding in $\Pic(S)$ preserving
$H$. Colloquially, we say that the line bundle $L$ is \emph{of type}
$\g{r}{d}$ on $(S,H)$.  We note that $\mathcal{K}^r_{g,d}$ is
irreducible by \cite{OGrady_irreducible_NL_divisors} whenever it is
nonempty.

For a general polarized K3 surface
$(S,H)\in\mathcal{K}^{r^\prime}_{g,d}$, curves $C\in|H|$ have an
induced line bundle $L\vert_C$ of interest, and one may ask about its
rank and degree. In particular, it may happen that
$L\vert_C$ is a $\g{r}{d}$, but $L$ is of type $\g{r^\prime}{d}$. In
this case, $r$ and $r^\prime$ are related by the following lemma.

\begin{lemma}[{\cite[Lemma 3.13]{haburcak_2022}}]\label{General 2r-2}
	Let $(S,H)$ be a polarized K3 surface of genus $g\ge2$, $C\in|H|$ be a smooth irreducible curve, and $L$ a globally generated line bundle on $S$ such that $L\vert_C$ is a $\g{r}{d}$ with $c_1(L).C=d<2g-2$. Then if $h^1(S,L)=0$, we have $L^2=2r-2-2h^1(S,L(-C))$.
\end{lemma}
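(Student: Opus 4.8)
The plan is to combine the restriction sequence of $L$ along $C$ with Riemann--Roch on the K3 surface $S$. Twisting the ideal sheaf sequence of $C$ by $L$ and using $\OO_S(C) \cong H$ gives
\[
0 \to L(-C) \to L \to L\vert_C \to 0, \qquad L(-C) \cong L - H,
\]
and passing to cohomology produces the exact sequence
\[
0 \to H^0(S, L(-C)) \to H^0(S,L) \to H^0(C, L\vert_C) \to H^1(S, L(-C)) \to H^1(S,L).
\]

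First I would establish two vanishing statements. The hypothesis $c_1(L).C = d < 2g-2 = H.C$ gives $(L-H).H = d - (2g-2) < 0$, so $L - H$ cannot be effective (as $H$ is nef), whence $h^0(S, L(-C)) = 0$; this is precisely where the degree bound enters, and it makes the restriction map $H^0(S,L) \to H^0(C, L\vert_C)$ injective. Next, since $L$ is globally generated and nontrivial it is effective, so Serre duality gives $h^2(S,L) = h^0(S,-L) = 0$.

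Feeding the assumption $h^1(S,L) = 0$ and these two vanishings into the exact sequence collapses it to the short exact sequence
\[
0 \to H^0(S,L) \to H^0(C, L\vert_C) \to H^1(S, L(-C)) \to 0,
\]
so that $h^0(C, L\vert_C) = h^0(S,L) + h^1(S, L(-C))$. As $L\vert_C$ is a $\g{r}{d}$, the left-hand side equals $r+1$. Finally, Riemann--Roch on $S$ reads $\chi(S,L) = \tfrac12 L^2 + 2$, and with $h^1(S,L) = h^2(S,L) = 0$ this becomes $h^0(S,L) = \tfrac12 L^2 + 2$. Substituting yields $r+1 = \tfrac12 L^2 + 2 + h^1(S, L(-C))$, i.e.\ $L^2 = 2r - 2 - 2h^1(S, L(-C))$, as claimed.

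The argument is essentially cohomological bookkeeping once the two vanishings are in place; the only substantive input is the inequality $d < 2g-2$, which both forces $h^0(S, L(-C)) = 0$ and guarantees the injectivity of the restriction map. The one mild point to watch is the vanishing $h^2(S,L) = 0$, which I would justify from the effectivity of the globally generated (and nontrivial) bundle $L$ together with Serre duality.
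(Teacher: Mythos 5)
Your argument is correct, and it is essentially the argument of the cited source (the paper itself only quotes this lemma from \cite[Lemma~3.13]{haburcak_2022} without reproducing the proof): twist the ideal sheaf sequence of $C$ by $L$, kill $h^0(S,L(-C))$ using $(L-H).H=d-(2g-2)<0$, kill $h^2(S,L)$ by effectivity and Serre duality, and combine the resulting four-term exact sequence with Riemann--Roch on $S$. The only point worth flagging is that you should read ``$L\vert_C$ is a $\g{r}{d}$'' as $h^0(C,L\vert_C)=r+1$ exactly (as the intended use of the lemma requires), since the paper's introduction defines a $\g{r}{d}$ by the inequality $h^0\ge r+1$.
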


In considering Brill--Noether special polarized K3 surfaces, we are naturally lead to consider two constraints. First, we have
$\rho(g,r,d)<0$ as the restriction of $L$ to a smooth curve $C \in
|H|$ is (usually) a Brill--Noether special $\g{r}{d}$, cf.\
\cite[Lemma~1.1]{haburcak_2022}.  We call the constraint
$\rho(g,r,d)<0$ the \emph{Brill--Noether constraint}.  Second,
the Hodge index theorem implies that the discriminant of
$\Lambda^r_{g,d}$ is negative, and we define
\[
\Delta(g,r,d)\colonequals\operatorname{disc}\left(\Lambda^g_{d,2r-2}\right)=4(g-1)(r-1)-d^2=4(g-1)(r-1)-(\gamma(g,r,d)+2r)^2,
\]
calling the constraint $\Delta(g,r,d)<0$ the \emph{Hodge constraint}.
We rephrase the characterization of the Brill--Noether special locus
in $\mathcal{K}_g$ in \cite{greer-li-tian2014} using the Hodge and
Brill--Noether constraints.

\begin{prop}[{\cite[Lemma 2.8]{greer-li-tian2014}}]
\label{prop:GLT}
The Brill--Noether special locus in $\mathcal{K}_g$ is a union of the
NL divisors $\mathcal{K}^r_{g,d}$ satisfying $0 \le d \le g-1$,
$\rho(g,r,d)<0$, and $\Delta(g,r,d)<0$.
\end{prop}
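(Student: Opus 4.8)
The plan is to prove the two inclusions separately, after recording a dictionary that translates both Mukai's analytic definition and membership in $\mathcal{K}^r_{g,d}$ into statements about one auxiliary line bundle. If $L$ is a line bundle on $S$ with $L^2 = 2r-2$ and $H.L = d$, then the Gram matrix of $\langle H, L\rangle$ is exactly $\Lambda^r_{g,d}$, its determinant is $(2g-2)(2r-2) - d^2 = \Delta(g,r,d)$, and Riemann--Roch on the K3 surface gives $\chi(S,L) = 2 + L^2/2 = r+1$ and $\chi(S, H-L) = 2 + (H-L)^2/2 = g-d+r$, so that the Brill--Noether constraint is precisely the inequality $(r+1)(g-d+r) > g$. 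For the inclusion $\mathcal{K}^r_{g,d} \subseteq \{\text{BN special}\}$, suppose $\Lambda^r_{g,d}$ embeds primitively in $\Pic(S)$ preserving $H$, with $0 \le d \le g-1$, $\rho(g,r,d)<0$, and $\Delta(g,r,d)<0$; let $L$ be the image of the second generator. Here $d \ge 1$, since $d=0$ would force $r=0$ by the Hodge constraint and hence $\rho(g,0,0)=0$, while $d \le g-1 < 2g-2$; as $H$ is nef with $H^2 = 2g-2$, neither $-L$ nor $L-H$ is effective, so $h^2(S,L)=h^0(S,-L)=0$ and $h^2(S,H-L)=h^0(S,L-H)=0$. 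Then $h^0(S,L)\ge\chi(S,L)=r+1$ and $h^0(S,H-L)\ge\chi(S,H-L)=g-d+r$, whence $h^0(S,L)\,h^0(S,H-L)\ge(r+1)(g-d+r)>g$, and being an integer it is $\ge g+1$. This argument uses only the existence of the classes and the nefness of $H$, so it applies to every surface in $\mathcal{K}^r_{g,d}$, not merely the generic one.

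For the reverse inclusion, let $(S,H)$ be Brill--Noether special with witness $L\neq H$, so $h^0(S,L)\,h^0(S,H-L)\ge g+1>0$ forces both $L$ and $H-L$ to be effective. Set $r := 1 + L^2/2$ (an integer, as the intersection form is even) and $d := H.L$. The Hodge constraint is immediate from the Hodge index theorem: the rank-two sublattice $\langle H,L\rangle\subseteq\Pic(S)$ contains the positive vector $H$ and hence has signature $(1,1)$ inside a lattice of signature $(1,\operatorname{rk}\Pic(S)-1)$, so $\Delta(g,r,d)=\det\langle H,L\rangle<0$. Replacing $L$ by $H-L$ if necessary, which preserves the product of sections, I may assume $0\le d\le g-1$. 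The remaining and main point is the Brill--Noether constraint. The subtlety is that $h^0(S,L)=r+1+h^1(S,L)$ may strictly exceed $r+1$, so the hypothesis $h^0\,h^0\ge g+1$ does not by itself yield $(r+1)(g-d+r)\ge g+1$.

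To control the $h^1$ terms I would reduce the witness to line bundles with vanishing first cohomology. Passing from $L$ to the moving part of $|L|$ preserves $h^0(S,L)$ and only increases $h^0(S,H-L)$, since the fixed part is effective, so I may take $L$, and symmetrically $H-L$, to be base-point free; the mechanism is to strip base components and reflect in the $(-2)$-curves meeting $L$ or $H-L$ negatively, using that $H.(\,\cdot\,)$ is bounded on effective subdivisors of $H$ to guarantee termination. For a nef line bundle $N$ with $N^2>0$, Kawamata--Viehweg vanishing together with $K_S=\OO_S$ gives $h^1(S,N)=0$, so once both $L$ and $H-L$ are nef and big one has $h^0(S,L)=r+1$ and $h^0(S,H-L)=g-d+r$ exactly; the hypothesis then reads $(r+1)(g-d+r)\ge g+1$, i.e. $\rho(g,r,d)<0$. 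Finally, saturating $\langle H,L\rangle$ to a primitively embedded rank-two lattice preserving $H$ replaces $(r,d)$ by the invariants of a finer lattice, which only sharpens the two strict inequalities, so $(S,H)\in\mathcal{K}^r_{g,d}$ for an admissible triple.

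The hard part is the cohomological reduction just described: arranging $L$ and $H-L$ to be simultaneously nef and big so that both $h^1$'s vanish, and treating the degenerate configurations where this fails. The worst case is when a moving part has self-intersection zero and is a multiple $kE$ of a genus-one pencil, so that $h^1\neq 0$ and $h^0=k+1\neq r+1$; the naive replacement of $L$ by $E$ need not preserve the product $\ge g+1$, and one must instead identify the correct primitive class carrying the marking and verify $\rho<0$ and $\Delta<0$ for it directly. Handling these elliptic and $(-2)$-curve cases, together with the rank-one degeneracy $L\in\Q H$, is where the genuine work of the proof lies.
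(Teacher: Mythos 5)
Your proposal attempts a from-scratch proof, whereas the paper's proof is essentially a translation: it cites \cite[Lemma 2.8]{greer-li-tian2014} for the actual characterization and merely checks that the numerical bounds appearing there (the lower bound $\sqrt{4(g-1)(r-1)}<d$ and the upper bound $d\le r+g-\frac{g+1}{r+1}$) are restatements of $\Delta(g,r,d)<0$ and $\rho(g,r,d)\le -1$. Your forward inclusion ($\mathcal{K}^r_{g,d}$ with the stated constraints implies Brill--Noether special) is correct and complete: the Riemann--Roch computation $\chi(S,L)=r+1$, $\chi(S,H-L)=g-d+r$, the vanishing of the $h^2$ terms from non-effectivity of $-L$ and $L-H$, and the identification of $(r+1)(g-d+r)>g$ with $\rho<0$ are all sound, and this is genuinely more than the paper writes down.

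The reverse inclusion, however, has a real gap, which you yourself flag but do not close. The hypothesis $h^0(S,L)\,h^0(S,H-L)\ge g+1$ only gives $(r+1+h^1(S,L))(g-d+r+h^1(S,H-L))\ge g+1$, and your strategy for killing the $h^1$ terms --- strip fixed components, reflect in $(-2)$-curves, invoke Kawamata--Viehweg once both classes are nef and big --- is described rather than executed. The worst configurations are exactly the ones you name: when the moving part is $kE$ for a genus-one pencil $E$, one has $h^0=k+1$ with $L^2=0$, so $r=1$ while the section count is inflated by $k$, and it is not clear that the class $E$ (or any class produced by your reduction) satisfies $\rho<0$; the product of sections is not preserved under the replacement, as you admit. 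The final saturation step is also unjustified as stated: passing to the primitive closure of $\langle H,L\rangle$ does keep the discriminant negative (it divides by the square of the index), but the new generator $L'$ has different invariants $(r',d')$ and there is no argument that $\rho(g,r',d')<0$, nor that $L'$ is effective so that the forward computation applies to it. Since the entire content of the proposition in the hard direction lives in precisely these degenerate cases, the proposal as written does not establish the statement; to repair it you would either need to carry out the case analysis (fixed components, elliptic pencils, $L\in\Q H$) in full, or do what the paper does and defer to \cite[Lemma 2.8]{greer-li-tian2014}, where that analysis is performed.
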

\begin{proof}
By replacing $L$ with $H-L$, we can always assume that $d\le g-1$.  In
\cite[Lemma 2.8]{greer-li-tian2014}, the lower bound
$\sqrt{4(g-1)(r-1)}<d$ is simply $\Delta(g,r,d)<0$ while the other
upper bound $d\le r+g-\frac{g+1}{r+1}$ is equivalent to $\rho(g,r,d)\le-1$.
\end{proof}
	
We remark that the only NL loci that are Brill--Noether special with
$d \leq 1$ is the locus $\mathcal{K}^1_{g,1}$ of elliptic K3s with a
section, but on this locus $H$ has a fixed component.

\section{Lifting Brill--Noether special line bundles}
\label{Section lifting line bundles background}

We collect useful facts on lifting Brill--Noether special line bundles from a curve $C$ to line bundles on the K3 surface $S$. Clearly, one cannot always lift a line bundle from $C$ to $S$, however, one expects to be able to lift Brill--Noether special line bundles in a weaker sense.

The \emph{Clifford index} of a line bundle $A$ on a smooth projective
curve $C$ is the integer $\gamma(A)=\deg(A)-2\, r(A)$ where $r(A)
= h^0(C,A)-1$ is the rank of $A$. The \emph{Clifford index of $C$} is
the integer
\[
\gamma(C) \colonequals \min \{\gamma(A) ~\vert~ h^0(C,A) \geq
2\text{ and }h^1(C,A) \geq 2 \},
\]
and we say that a line bundle $A$ on $C$ \emph{computes} the Clifford
index of $C$ if $\gamma(A)=\gamma(C)$. Clifford's theorem states that
$0\le\gamma(C)\le\lfloor\frac{g-1}{2}\rfloor$, and furthermore that
$\gamma(C)=\lfloor\frac{g-1}{2}\rfloor$ when $C$ is a general curve of genus ~$g$.

\begin{defn}[{\cite[Definition 1.3]{haburcak_2022}}]
Let $A$ be a Brill--Noether special $\g{r}{d}$ on a curve $C$ of
genus~$g$, i.e. $\rho(g,r,d)<0$. We say $A$ is \emph{non-computing} if
$\gamma(r,d)>\lfloor \frac{g-1}{2} \rfloor$, that is, $A$ is a
Brill--Noether special $\g{r}{d}$ that cannot compute the Clifford
index of $C$. We note that non-computing $\g{r}{d}$ only appear in genus
$\ge 14$.
\end{defn}

Next, we introduce some notions required for the statement of the
Donagi--Morrison conjecture.

\begin{defn}\label{definition linear system contained in restriction of linear system}
	Let $S$ be a K3 surface, $C\subset S$ be a curve, and $A\in\Pic(C)$ and $M\in\Pic(S)$ be line bundles. We say that the linear system $|A|$ is \emph{contained in the restriction} of $|M|$ to $C$ when for every $D_0\in|A|$, there is some divisor $M_0\in|M|$ such that $D_0\subset C\cap M_0$.
\end{defn}	
\begin{defn}\label{definition adapted}
A line bundle $M$ is \emph{adapted} to $|H|$ when 
\begin{enumerate}[label=(\roman*)]
\item $h^0(S,M)\ge 2$ and $h^0(S,H - M)\ge2$; and
\item $h^0(C,M|_C)$ is independent of the smooth curve $C\in|H|$.
	\end{enumerate}  
\end{defn}
Thus whenever $M$ is adapted to $|H|$, condition (i) ensures that
$M|_C$ contributes to $\gamma(C)$, and condition (ii) ensures that
$\gamma(M|_C)$ is constant as $C$ varies in its linear system and is
satisfied if either $h^1(S,M)=0$ or $h^1(S,H - M)=0$.

\begin{conj}[Donagi--Morrison Conjecture, \cite{Lelli_Chiesa_2015} Conjecture 1.3]\label{conj DM}
Let $(S,H)$ be a polarized K3 surface and $C\in|H|$ be a smooth
irreducible curve of genus $\ge 2$. Suppose $A$ is a complete
basepoint free $\g{r}{d}$ on $C$ such that $d\le g-1$ and
$\rho(g,r,d)<0$. Then there exists a line bundle $M\in\Pic(S)$ adapted
to $|H|$ such that $|A|$ is contained in the restriction of $|M|$ to
$C$ and $\gamma(M|_C)\le \gamma(A)$.
\end{conj}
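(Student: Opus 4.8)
The natural strategy is via Lazarsfeld--Mukai bundles. Given the complete basepoint free $\g{r}{d}$ $A$ on $C\in|H|$, I would first form the rank $r+1$ Lazarsfeld--Mukai bundle $E=E_{C,A}$, defined by dualizing the kernel of the evaluation map $H^0(C,A)\otimes\OO_S\twoheadrightarrow A$, where $A$ is viewed as a sheaf on $S$ supported on $C$. Its invariants are $\operatorname{rk}E=r+1$, $c_1(E)=H$, $c_2(E)=d$, and $h^0(S,E)=h^0(C,A)+h^1(C,A)=g-d+2r+1$, while $H^1(S,E)=H^2(S,E)=0$; moreover $E$ is globally generated off a finite set. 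The point of passing to $E$ is that the sections of $A$ are now encoded as global sections of a bundle on the \emph{surface}, so that an appropriate subsheaf of $E$ should produce the line bundle $M$ we seek.

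The key numerical input is the Riemann--Roch identity on the K3 surface
\[
\chi(\mathcal{E}nd E)=2-2\rho(g,r,d),
\]
which follows from computing the discriminant $\Delta(E)=2(r+1)d-2(g-1)r$ and the Mukai pairing. Since $\rho(g,r,d)<0$ by hypothesis, $\chi(\mathcal{E}nd E)>2$, and because $h^2(\mathcal{E}nd E)=h^0(\mathcal{E}nd E)$ by Serre duality we obtain $h^0(\mathcal{E}nd E)>1$; that is, $E$ is not simple. As a $\mu_H$-stable sheaf is simple, $E$ fails to be $\mu_H$-stable, and I would extract from its Harder--Narasimhan or Jordan--H\"older filtration a saturated subsheaf $\mathcal{E}\subset E$ with $\mu_H(\mathcal{E})\ge\mu_H(E)$, fitting into $0\to\mathcal{E}\to E\to\mathcal{Q}\to 0$. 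Passing to $M=\det\mathcal{E}$ (or, after reduction, a suitable sub-line-bundle of $E$) gives the candidate lift.

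With $M$ in hand I would verify the three required properties. Adaptedness to $|H|$ in the sense of \Cref{definition adapted} amounts to $h^0(S,M)\ge2$, $h^0(S,H-M)\ge2$, and constancy of $h^0(C,M|_C)$; the first two inequalities follow from the positivity of the destabilizing data together with global generation of $E$, while constancy follows once one arranges $h^1(S,M)=0$ or $h^1(S,H-M)=0$. That $|A|$ is contained in the restriction of $|M|$ to $C$ is essentially built into the construction, since the sections of $A$ lift to sections of $E$ factoring through $\mathcal{E}$, exhibiting each effective divisor of $A$ inside $C\cap M_0$ for a suitable $M_0\in|M|$. The Clifford bound $\gamma(M|_C)\le\gamma(A)$ would then be extracted from a Hodge-index estimate relating $M^2$, $M.H$, and $d$.

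The genuine difficulty --- and the reason this remains a conjecture rather than a theorem --- is control of the destabilizing subsheaf $\mathcal{E}$. For $r=1$ (the original Donagi--Morrison pencil case) $E$ has rank $2$, so $\mathcal{E}$ is automatically a line bundle and the argument closes cleanly. For $r\ge2$ the subsheaf $\mathcal{E}$ may have rank anywhere between $1$ and $r$, and there is no \emph{a priori} guarantee that any line bundle read off from the filtration simultaneously satisfies adaptedness and the sharp inequality $\gamma(M|_C)\le\gamma(A)$; in fact the inequality can fail, which is exactly why the unbounded conjecture is false in general. I therefore expect the crux to be a delicate case analysis of the rank and slope of $\mathcal{E}$, invoking the structural results of Lelli-Chiesa~\cite{Lelli_Chiesa_2015} on maximal destabilizing subsheaves together with the author's numerical bounds in~\cite{haburcak_2022} to force the destabilizer into a controllable shape. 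This is precisely the mechanism that succeeds only in a bounded range, yielding the bounded version \Cref{conj Bounded Strong DM} that drives the main theorem.
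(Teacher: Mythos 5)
The statement you were asked to prove is a conjecture, and the paper contains no proof of it: it appears as \Cref{conj DM} precisely because it is open for $r\ge 2$, and the paper notes (citing Appendix A of \cite{Lelli_Chiesa_2015}) that Knutsen and Lelli-Chiesa have constructed counterexamples showing that further hypotheses are required. So no complete argument along the lines you sketch can exist for the statement as written. To your credit, your write-up is candid about being a strategy outline rather than a proof, and the strategy is exactly the one underlying the partial results the paper actually uses: form $E_{C,A}$, use $\chi(F_{C,A}\otimes E_{C,A})=2(1-\rho(g,r,d))>2$ together with Serre duality to conclude that $E_{C,A}$ is non-simple and hence not stable, extract a destabilizing or Jordan--H\"older subsheaf, and take $M=H-N$ for a suitable sub-line bundle $N$ (compare \Cref{Prop Proof Strategy} and \Cref{LC prop 5.1}). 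Your numerical invariants for $E_{C,A}$ agree with \Cref{LM bundle props}.

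The gap you name at the end --- controlling the destabilizing subsheaf when $r\ge 2$ --- is the genuine one, but be precise that it is not a technical nuisance to be handled by a ``delicate case analysis'': since the conjecture is false without a bound on $d$, any correct argument must use such a bound in an essential way, which is exactly the content of \Cref{conj Bounded Strong DM}, \Cref{theorem lifting g3ds general}, and the $r=2,3$ results of \cite{Lelli_Chiesa_2013,haburcak_2022}. Two smaller inaccuracies in your sketch: first, non-simplicity only rules out stability, so if $E_{C,A}$ is strictly $\mu_H$-semistable the Harder--Narasimhan filtration is trivial and the subsheaf you extract need not be a line bundle, nor need the line bundle $N$ you eventually produce satisfy $h^0(S,N)\ge 2$ --- arranging that is itself one of the hard steps (it is a hypothesis built into \Cref{conj Strong DM}); second, the bound $\gamma(M|_C)\le\gamma(A)$ is not obtained from a Hodge-index estimate but from the Chern class computation for the quotient as in \Cref{coker is gLM} and \Cref{prop dm lift restrictions if quotient is stable}. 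Measured against the paper, the honest answer here is a citation of the known cases together with the observation that the general statement is open, indeed false without extra hypotheses.
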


\Cref{conj DM} was verified for $r=1$ by Donagi and Morrison
\cite{DonagiMorrison_linsystemsonk3}, and ``bounded versions'' exist for
$r=2$ \cite{Lelli_Chiesa_2013} and $r=3$ \cite{haburcak_2022}, where
$d$ must be bounded above by some constant depending on $C$ and $S$. Some
further hypotheses are required, as shown in \cite[Appendix
A]{Lelli_Chiesa_2015}, where Knutsen and Lelli-Chiesa construct
counterexamples.

\begin{defn}
In the statement of \Cref{conj DM}, we call such a line bundle $M$ on $S$ a \emph{Donagi--Morrison
lift} of $A$.
We call a line bundle $M$ on $S$ a \emph{potential Donagi--Morrison
lift} of $A$ if $M$ satisfies $\gamma(M|_C)\le\gamma(A)$ and
$d(M|_C)\ge d(A)$. Note that a Donagi--Morrison lift is a potential
Donagi--Morrison lift.
\end{defn}


We will postpone a discussion of a modified \Cref{conj DM} to
\Cref{Section Lazarsfeld--Mukai Bundles and Lifting and Generalized LM
Bundles}, after we recall how Lazarsfeld--Mukai bundles are used to
construct Donagi--Morrison lifts.

Sometimes, one can do better than a Donagi--Morrison lift, and find a
line bundle $L$ on $S$ that is a \emph{lift} of $A$, i.e., such that
$L|_C = A$.  Under some technical hypotheses, Lelli-Chiesa finds lifts
of Brill--Noether special line bundles that compute the Clifford
index.

\begin{theorem}[{\cite[Theorem 4.2]{Lelli_Chiesa_2015}}]\label{LC Thm 4.2}
Let $A$ be a complete $\g{r}{d}$ on a non-hyperelliptic and
non-trigonal curve $C\subset S$ such that $d\le g-1$, $\rho(A)<0$, and
$\gamma(A)=\gamma(C)$. Assume $H=\OO_S(C)$ is ample and there
is no irreducible elliptic curve $\Sigma\subset S$ such that the
following holds: \begin{enumerate}
		\item[{\normalfont($\ast$)}] $\Sigma.C=4$ and no irreducible genus $2$ curve $B\subset S$ such that $B.C=6$.
\end{enumerate} Then, one of the following occurs: 
\begin{enumerate}[label={\normalfont(\roman*)}]
\item There exists a line bundle $L\in\Pic(S)$ adapted to $|H|$ such
that $A\cong L|_C$.

\item The line bundle $A$ is of type $\g{1}{d}$, in which case \Cref{conj DM} holds for $A$.
\end{enumerate}
\end{theorem}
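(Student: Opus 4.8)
The plan is to pass to the Lazarsfeld--Mukai bundle attached to $A$ and to read off the lift from its failure of stability. We may first assume $r\ge 2$: if $A$ is of type $\g{1}{d}$ we are in case (ii), where the $r=1$ instance of \Cref{conj DM} established by Donagi and Morrison applies directly. We may also assume $A$ is globally generated, since a line bundle computing the Clifford index of a non-hyperelliptic, non-trigonal curve can be taken basepoint free after the standard reduction subtracting its base locus, which does not raise the Clifford index. Writing $i\colon C\hookrightarrow S$ for the inclusion, define $F_{C,A}$ by the evaluation sequence
\[
0\to F_{C,A}\to H^0(C,A)\otimes\OO_S\to i_*A\to 0,
\]
and set $E\colonequals F_{C,A}^\vee$. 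Then $E$ is locally free of rank $r+1$ with $\det E=H$ and $c_2(E)=d$, and its global sections encode $h^0(C,A)$ and $h^1(C,A)$. The Brill--Noether hypothesis $\rho(A)<0$ forces $E$ to fail to be $\mu_H$-stable: this is the bundle-theoretic incarnation of Lazarsfeld's theorem that curves on Picard rank one K3s are Brill--Noether general.

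First I would extract, from the failure of stability, the Harder--Narasimhan (equivalently a Jordan--H\"older) filtration of $E$ with respect to $H$, and focus on the maximal destabilizing subsheaf $F\subset E$, taken saturated so that $Q\colonequals E/F$ is torsion free. The slope constraint $\mu_H(F)\ge\mu_H(E)=\tfrac{2g-2}{r+1}$, together with $\det E=H$, $c_2(E)=d$, and the Hodge index theorem, bounds the possible classes $N\colonequals c_1(F)$ and $H-N$. The role of the hypothesis $\gamma(A)=\gamma(C)$ is decisive here: Clifford-index minimality rules out any proper subsystem of $A$ of smaller Clifford index, which is exactly what prevents the filtration from being too unbalanced. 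Concretely, I expect to show that the destabilizing datum is in fact a line bundle $L$ (the filtration has a rank-one first step, or dually a rank-one final quotient) whose restriction to $C$ recovers $A$ on the nose, rather than merely containing $|A|$ in the restriction of $|L|$; it is precisely the equality $\gamma(A)=\gamma(C)$ that upgrades a potential Donagi--Morrison lift to a genuine lift $A\cong L|_C$.

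The main obstacle is the case analysis on the numerical type of $F$. The Hodge and slope inequalities admit a short list of borderline solutions, and the dangerous ones correspond to low-degree sub-line-bundles that do not yield a clean lift: these are exactly the configurations where the destabilization is governed by an elliptic pencil $\Sigma$ with $\Sigma.C=4$, producing a $\g{1}{4}$-type splitting, or by a genus $2$ curve $B$ with $B.C=6$, producing a $\g{2}{6}$-type splitting. Under hypothesis $(\ast)$ these curves are assumed not to exist on $S$, so these borderline cases are excluded and the destabilization must have the generic shape analyzed above. I would therefore spend most of the effort verifying that every remaining numerical possibility produces a line bundle $L$ with $L|_C\cong A$, and then checking that $L$ is adapted to $|H|$: the inequalities $h^0(S,L)\ge 2$ and $h^0(S,H-L)\ge 2$ follow from $r\ge 2$ and the structure of the filtration, while the constancy of $h^0(C,L|_C)$ as $C$ varies in $|H|$ follows from the vanishing of $h^1(S,L)$ or $h^1(S,H-L)$ read off from the same sequence. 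This yields conclusion (i) and completes the dichotomy.
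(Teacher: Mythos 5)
First, a remark on scope: the paper does not prove this statement at all; it is quoted from \cite[Theorem~4.2]{Lelli_Chiesa_2015}, so your proposal has to be measured against Lelli-Chiesa's argument. Your setup is consistent with hers: the reduction to $r\ge 2$ via Donagi--Morrison for pencils \cite{DonagiMorrison_linsystemsonk3}, the basepoint-free reduction, and the observation that $\rho(A)<0$ forces $E_{C,A}$ to be non-simple (hence non-stable), via $\chi(F_{C,A}\otimes E_{C,A})=2(1-\rho)>2$ as in \Cref{LM bundle props}, are all standard and match the actual starting point.

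The gap is in the core mechanism. You propose to take the Harder--Narasimhan filtration and let the slope inequality $\mu_H(F)\ge\mu_H(E_{C,A})$ together with the Hodge index theorem force a rank-one destabilizing piece whose restriction recovers $A$. That is precisely the strategy of the \emph{bounded} lifting results (\cite{Lelli_Chiesa_2013}, \cite{haburcak_2022}), and it is known to require an upper bound on $d$: slope and Hodge-index constraints alone do not control the rank or the numerical class of the maximal destabilizing subsheaf, which is exactly why \Cref{conj DM} for $r\ge 2$ is only established in bounded form and fails in general (\cite[Appendix~A]{Lelli_Chiesa_2015}). \Cref{LC Thm 4.2} carries no degree bound; what replaces it is the hypothesis $\gamma(A)=\gamma(C)$, and it enters not through slopes but through the Clifford indices of \emph{generalized} Lazarsfeld--Mukai bundles. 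Lelli-Chiesa uses a non-scalar endomorphism coming from non-simplicity to produce a Donagi--Morrison-type extension $0\to M\to E_{C,A}\to N\to 0$ whose two pieces are gLM sheaves, shows that $\det(M)\vert_C$ and $\det(N)\vert_C$ both contribute to $\gamma(C)$ --- this is where the exclusion of elliptic curves with $\Sigma.C=4$ and genus-$2$ curves with $B.C=6$ is needed, since otherwise a piece with $c_1^2\in\{0,2\}$ and small intersection with $C$ fails to contribute --- and then runs an induction on the rank with Clifford-index bookkeeping; minimality of $\gamma(A)=\gamma(C)$ collapses the decomposition to a rank-one piece and upgrades a mere Donagi--Morrison containment to an isomorphism $L\vert_C\cong A$ (compare \Cref{cor computing spec gamma lifts}). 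You correctly guess the role of the exceptional curves and correctly flag that $\gamma(A)=\gamma(C)$ must do the upgrading, but you never supply the gLM machinery or the Clifford-index comparison that actually closes the case analysis, and the HN-filtration route you do describe stalls without a bound on $d$.
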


This lifting result vastly strengthens that of Green and
Lazarsfeld~\cite{GreenLaz}, which itself resolved a conjecture of
Harris and Mumford, as modified by Green~\cite[Conjecture~5.8]{green}.
Harris and Mumford~\cite{harris_mumford} conjectured that the gonality
remains constant for smooth curves in an ample linear system on a K3
surface, but a counterexample was constructed by Donagi and
Morrison~\cite{DonagiMorrison_linsystemsonk3}, though this turned out
to be the unique counterexample, see \cite{ciliberto_pareschi},
\cite{knutsen:two_conjectures}.  Green modified the conjecture to a
statement about the constancy of the Clifford index of smooth curves
in an ample linear system on a K3.  See \cite[\S2]{aprodu} for further
details.

\section{Lazarsfeld--Mukai bundles and lifting}
\label{Section Lazarsfeld--Mukai Bundles and Lifting and Generalized LM Bundles}

In this section, we give additional numerical constraints on
Donagi--Morrison lifts obtained as determinants of stable quotients.
First, we briefly recall some facts about Lazarsfeld--Mukai bundles
(LM bundles). 
Let $\iota\colon C \hookrightarrow S$ be a smooth irreducible curve of
genus $g$ in $S$ and $A$ a basepoint free line bundle on $C$ of type
$\g{r}{d}$. We define a vector bundle $F_{C,A}$ on $S$ via the short
exact sequence
\[
0 \longrightarrow F_{C,A} \longrightarrow H^0(C,A) \otimes \OO_X
\xrightarrow{\;ev\;} \iota_* A \longrightarrow 0
\]
The LM bundle $E_{C,A}\colonequals F_{C,A}^\vee$ associated to $A$ on
$C$ then sits in the short exact sequence
\[
0 \longrightarrow H^0(C,A)^\vee \otimes\OO_S \longrightarrow E_{C,A} \otimes \OO_X
\xrightarrow{\;ev\;} \iota_\ast(\omega_C\otimes A^\vee) \longrightarrow 0
\]
from which the following facts are readily proved, cf.\
\cite[Proposition~1.2]{aprodu}.

\begin{prop}\label{LM bundle props}
	Let $E_{C,A}$ be a LM bundle associated to a basepoint free line bundle $A$ of type $\g{r}{d}$ on $C\subset S$, then:
	\begin{itemize}
		\item $\det E_{C,A}=c_1(E_{C,A})=[C]$ and $c_2(E_{C,A})=\operatorname{deg}(A)$;
		\item $\operatorname{rk}(E_{C,A})=r+1$ and $E_{C,A}$ is globally generated off the base locus of $\iota_\ast(\omega_C\otimes A^\vee)$;
		\item $h^0(S,E_{C,A})=h^0(C,A)+h^0(C,\omega_C\otimes A^\vee)=2r+1+g-d=g-(d-2r)+1$;
		\item $h^1(S,E_{C,A})=h^2(S,E_{C,A})=0$, $h^0(S,E^\vee_{C,A})=h^1(S,E^\vee_{C,A})=0$;
		\item $\chi(F_{C,A}\otimes E_{C,A})=2(1-\rho(g,r,d))$.
	\end{itemize}
\end{prop}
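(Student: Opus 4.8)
The plan is to read off all five bullet points from the two defining exact sequences, using only the standard invariants of a K3 surface: $\omega_S\isom\OO_S$ and $h^1(S,\OO_S)=0$, together with $c_1(S)=0$, $c_2(S)=24$ (so $\chi(\OO_S)=2$ and $\operatorname{td}(T_S)=1+2[\mathrm{pt}]$), the adjunction identity $[C]^2=2g-2$, and Grothendieck--Riemann--Roch (GRR) for the closed immersion $\iota$. I would treat the items in the order rank, Chern classes, cohomology, and finally the Euler characteristic of the endomorphism bundle.

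For the rank and $c_1$, I would note that $\iota_\ast A$ is torsion (supported on $C$), so the first sequence gives $\operatorname{rk}F_{C,A}=h^0(C,A)=r+1$, hence $\operatorname{rk}E_{C,A}=r+1$. Since the middle term is trivial, total Chern classes satisfy $c(F_{C,A})\,c(\iota_\ast A)=1$, and as $c_1(\iota_\ast A)=[C]$ we get $c_1(E_{C,A})=-c_1(F_{C,A})=[C]$, i.e. $\det E_{C,A}=\OO_S(C)$. For $c_2$ I would compute $\operatorname{ch}(\iota_\ast A)$ by GRR: from $\operatorname{ch}(A)=1+c_1(A)$ and $\operatorname{td}(T_C)=1-\tfrac12K_C$ on the curve, pushing forward and multiplying by $\operatorname{td}(T_S)^{-1}=1-2[\mathrm{pt}]$ yields $\operatorname{ch}(\iota_\ast A)=[C]+(d-g+1)[\mathrm{pt}]$. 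Converting to Chern classes and inserting $[C]^2=2g-2$ gives $c_2(\iota_\ast A)=[C]^2-d$, whence $c_2(E_{C,A})=c_2(F_{C,A})=[C]^2-c_2(\iota_\ast A)=d=\deg A$.

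For the cohomology I would use the two sequences directly. From the second sequence and $h^1(S,\OO_S)=0$, the map $H^0(S,E_{C,A})\to H^0(C,\omega_C\otimes A^\vee)$ is surjective, so $h^0(S,E_{C,A})=h^0(C,A)+h^0(C,\omega_C\otimes A^\vee)$; Riemann--Roch and Serre duality on $C$ give $h^0(C,\omega_C\otimes A^\vee)=h^1(C,A)=r+g-d$, yielding $h^0(S,E_{C,A})=2r+1+g-d=g-(d-2r)+1$. The vanishings come from the first sequence: the induced map on $H^0$ is the identity $H^0(C,A)\to H^0(S,\iota_\ast A)=H^0(C,A)$, so $h^0(S,F_{C,A})=0$, and the connecting map lands in $H^1(S,F_{C,A})\hookrightarrow H^0(C,A)\otimes H^1(S,\OO_S)=0$, forcing $h^1(S,F_{C,A})=0$. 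Since $E_{C,A}^\vee=F_{C,A}$, Serre duality on the K3 then gives $h^2(S,E_{C,A})=h^0(S,E_{C,A}^\vee)=0$ and $h^1(S,E_{C,A})=h^1(S,E_{C,A}^\vee)=0$, simultaneously recording $h^0(S,E^\vee_{C,A})=h^1(S,E^\vee_{C,A})=0$. Global generation off the base locus of $\iota_\ast(\omega_C\otimes A^\vee)$ follows because $E_{C,A}$ is trivial away from $C$, while on $C$ the surjection onto $\iota_\ast(\omega_C\otimes A^\vee)$ is realized on global sections by the surjectivity just established.

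Finally, for $\chi(F_{C,A}\otimes E_{C,A})=\chi(E_{C,A}^\vee\otimes E_{C,A})=\chi(\mathcal{E}nd\,E_{C,A})$ I would apply Riemann--Roch on the K3. Writing $n=r+1$, $c_1=[C]$, $c_2=d$, a short computation of $\int_S\operatorname{ch}(E_{C,A}^\vee)\operatorname{ch}(E_{C,A})\operatorname{td}(T_S)$ gives $2n^2+(n-1)[C]^2-2nd=2(r+1)^2+2r(g-1)-2(r+1)d$, and I would then verify the purely numerical identity that this equals $2(1-\rho(g,r,d))$ for $\rho(g,r,d)=g-(r+1)(g-d+r)$. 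I expect the only places requiring genuine care---rather than formal diagram-chasing forced by the two sequences and $h^1(S,\OO_S)=0$---to be the GRR computation of $c_2$ (where $[C]^2=2g-2$ must enter correctly) and this final algebraic identity.
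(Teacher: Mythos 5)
Your proposal is correct and follows essentially the same route as the paper, which does not write out a proof but notes these facts are "readily proved" from the two defining exact sequences and cites Aprodu's survey: the standard arguments are exactly your combination of the long exact sequences with $h^1(S,\OO_S)=0$, Grothendieck--Riemann--Roch for $\iota_*A$, Serre duality on the K3, and Riemann--Roch for $\chi(E_{C,A}^\vee\otimes E_{C,A})$. All of your computations (in particular $c_2(\iota_*A)=[C]^2-d$ and the final identity $2(r+1)^2+2r(g-1)-2(r+1)d=2(1-\rho(g,r,d))$) check out.
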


In lifting line bundles on a curve $C\in|H|$ to line bundles on a
polarized K3 surface $(S,H)$, it is also useful to work with
generalized Lazarsfeld--Mukai bundles, which are defined in
\cite{Lelli_Chiesa_2015}.

\begin{defn}[{\cite[Definition 1]{Lelli_Chiesa_2015}}]\label{gLM bundle defn}
	A torsion free coherent sheaf $E$ on $S$ with $h^2(S,E)=0$ is called a \emph{generalized Lazarsfeld--Mukai bundle} (gLM bundle) of type (I) or (II), respectively, if
	\begin{enumerate}[label=(\Roman*)]
		\item $E$ is locally free and generated by global sections off a finite set;\\
		or
		\item $E$ is globally generated.
	\end{enumerate}
\end{defn}

A LM bundle is a gLM bundle of type (I), and a partial converse is
given in \cite[Proposition~1.3]{aprodu}, with special case as follows.
 
\begin{remark}[{\cite[Remark 1]{Lelli_Chiesa_2015}}]\label{remark glm is lm bundle}
If $E$ is a gLM bundle of type (I) and (II) then $E$ is the LM bundle
associated with a base point free linear series $V \subseteq H^0(C,A)$
on a smooth irreducible curve $C\subset S$ such that $\omega_C \otimes
A^\vee$ is also basepoint free.  Specifically, $E=E_{C,(A,V)}$ is the
dual of the kernel of the evaluation map $V\otimes \OO_S\to
A$. Furthermore, $V=H^0(C,A)$ if and only if $h^1(S,E)=0$, in which
case $E$ is just the LM bundle associated to $A$.
\end{remark}

\begin{defn}
A line bundle $A$ on a smooth curve $C$ is called
\emph{primitive} if both $A$ and $\omega_C\otimes A^\vee$ are
basepoint free.
\end{defn}

The definition of the Clifford invariant of a linear
system on a curve motivates the following.

\begin{defn}
	Let $E$ be a gLM bundle. The \emph{Clifford index of $E$} is:
	\[\gamma(E)\colonequals c_2(E) - 2(\operatorname{rk}(E)-1).\]
\end{defn}

Indeed, for a LM bundle $E_{C,A}$ associated to a line bundle $A$ of
type $\g{r}{d}$ on a smooth curve $C\subset S$, one has $\gamma(E_{C,A})=\gamma(A)$ by \Cref{LM bundle props}.

\smallskip

Next, we recall some useful results using Lazarsfeld--Mukai bundles to
find lifts of Brill--Noether special line bundles. In particular, for
low $r$, we can add a bound that forces sub-line bundles to be
non-trivial.

\begin{prop}[{\cite[Proposition 3.17]{haburcak_2022}}]\label{Prop Proof Strategy}
	Let $(S,H)$ be a polarized K3 surface and $A$ be a complete
        basepoint free $g^r_d$ on a smooth irreducible curve $C\in|H|$
        with $r\ge 2$ and let $E=E_{C,A}$. Suppose that $E$ sits in a
        short exact sequence \[\xymatrix{0\ar[r] & N\ar [r]& E \ar[r]
        & E/N\ar[r] & 0}\] for some line bundle $N$ and
        $c_2(E)=d<\frac{g(r-1)}{r}+\frac{2g-2}{r(r+1)}+r-\frac{1}{r}$. If
        $E/N$ is stable, or $E/N$ is semistable and there are no
        elliptic curves on $S$, then $|A|$ is contained in the
        restriction to $C$ of the linear system $|H - N|$ on
        $S$. Moreover, $H - N$ is adapted to $|H|$ and $\gamma\bigl((H - N)|_C\bigr)\le d-r-3 $.
\end{prop}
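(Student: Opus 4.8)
The plan is to take $M\colonequals H-N=\det(E/N)$ as the candidate Donagi--Morrison lift and to read off its properties from the two defining sequences of the Lazarsfeld--Mukai bundle, together with stability of the quotient $F\colonequals E/N$. Writing the destabilizing sequence as $0\to N\to E\to F\to 0$ with $F$ of rank $r$, stability of $F$ makes it torsion free and $N$ saturated. By \Cref{LM bundle props} we have $\det E=[C]=H$, so $c_1(F)=M$, and $F$ is globally generated off a finite set, being a quotient of $E$. The Whitney formula gives $c_2(F)=d-N\cdot M$, and global generation forces $c_2(F)\ge 0$.

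The heart of the argument is the \emph{containment} of $|A|$ in the restriction of $|M|$, which I would obtain by dualizing. Since $E$ is locally free and $F$ torsion free, dualizing yields
\[
0\longrightarrow F^\vee\longrightarrow E^\vee\longrightarrow N^\vee,
\]
so that $F^\vee=\ker(E^\vee\to N^\vee)$. By construction $E^\vee=F_{C,A}$, which is a subsheaf of $H^0(C,A)\otimes\OO_S$ via its evaluation sequence; hence $F^\vee\hookrightarrow H^0(C,A)\otimes\OO_S$ with rank-one quotient of first Chern class $M$, i.e.\ $I_W\otimes M$ for a zero-dimensional $W$. Because $F^\vee$ lies in $F_{C,A}=\ker(\mathit{ev})$, the evaluation $H^0(C,A)\otimes\OO_S\to\iota_\ast A$ factors through $I_W\otimes M$; restricting this factorization to $C$ exhibits each divisor of $|A|$ as cut out on $C$ by a section of $M$ vanishing along $W$, which is precisely \Cref{definition linear system contained in restriction of linear system}. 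The same factorization shows the induced map $H^0(C,A)\to H^0(S,M)$ is injective (a section in its kernel would lie in $\ker(\mathit{ev})$ and hence vanish), so $h^0(S,M)\ge h^0(C,A)=r+1\ge 3$.

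It then remains to check that $M$ is adapted to $|H|$ and to bound its Clifford index, and this is where the numerical hypothesis on $d$ enters. Condition (i) of \Cref{definition adapted} needs $h^0(S,M)\ge 2$, which we have, and $h^0(S,H-M)=h^0(S,N)\ge 2$; to get the latter I would use the bound on $d$ together with stability of $F$ (and, in the semistable case, the absence of elliptic curves) to force $N$ to be a \emph{nontrivial effective} class with $N^2\ge 0$, whence Riemann--Roch gives $h^0(S,N)\ge\chi(N)=2+N^2/2\ge 2$. Condition (ii) follows once one of $h^1(S,M)$ or $h^1(S,N)$ vanishes, which the same numerics provide. For the Clifford bound, effectivity of $N$ (with $N\cdot H<2g-2$) makes the restriction $H^0(S,M)\hookrightarrow H^0(C,M|_C)$ injective, so Riemann--Roch on $S$ bounds $r(M|_C)$ from below; combined with $\deg(M|_C)=M\cdot H$ this yields $\gamma(M|_C)\le N\cdot M-2=d-c_2(F)-2$, and the hypothesis on $d$, via the discriminant inequality for the semistable quotient $F$ (giving $c_2(F)\ge r+1$), produces $\gamma(M|_C)\le d-r-3$.

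The main obstacle is this third step: converting the explicit upper bound on $d$, together with stability of the quotient and the geometry of $S$, into the positivity statements $N^2\ge 0$, $h^0(S,N)\ge 2$, and $c_2(F)\ge r+1$. The duality and containment arguments are essentially formal, but forcing the sub-line bundle $N$ to be nontrivial and effective --- the step that genuinely degrades as $r$ grows, and the reason the resulting bound is only useful for $r\le 3$ --- requires a careful interplay of the slope inequality coming from destabilization, the Hodge index theorem applied to $\langle H,N\rangle$, and the hypothesis that $S$ contains no elliptic curves in the merely semistable case.
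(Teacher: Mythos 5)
The paper does not actually prove this proposition: it is imported wholesale from \cite[Proposition~3.17]{haburcak_2022}, with only the remark after the statement (on why stability of $E/N$ is imposed) and \Cref{LC prop 5.1}, \Cref{coker is gLM} indicating the underlying Lazarsfeld--Mukai strategy. Measured against that strategy, your skeleton is the right one: obtain the containment by dualizing $0\to N\to E\to E/N\to 0$ and embedding $(E/N)^\vee$ into $H^0(C,A)\otimes\OO_S$; get adaptedness from Riemann--Roch and vanishing; and reduce the Clifford bound to $\gamma(M|_C)\le M.N-2=d-c_2(E/N)-2$ plus a lower bound on $c_2(E/N)$ coming from the Mukai-vector inequality for the stable quotient (this is exactly \Cref{prop dm lift restrictions if quotient is stable}(ii)).

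However, there are genuine gaps, concentrated precisely where the content of the proposition lies. First, the numerical hypothesis $d<\frac{g(r-1)}{r}+\frac{2g-2}{r(r+1)}+r-\frac{1}{r}$ is never actually used: every quantitative claim you need --- that $N$ is nontrivial and effective, that $N^2\ge 0$ so $h^0(S,N)\ge 2$, and that $c_2(E/N)\ge r+1$ --- is asserted to follow from ``the bound on $d$ together with stability'' but not derived. As written, nothing excludes $N=\OO_S$ (in which case $h^0(S,H-M)=1$ and $M$ is not adapted) or $N^2<0$; and the discriminant inequality only gives $c_2(E/N)\ge\frac{(r-1)M^2}{2r}+r-\frac{1}{r}$, which produces $c_2(E/N)\ge r+1$ only after one bounds $M^2$ from below, itself requiring an argument. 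Carrying out the interplay between this inequality, the Hodge index theorem on $\langle H,N\rangle$, and the hypothesis on $d$ \emph{is} the proof, and it is missing; so is any treatment of the semistable case and the role of the no-elliptic-curve hypothesis. Second, the identification of the cokernel $H^0(C,A)\otimes\OO_S/(E/N)^\vee$ with $I_W\otimes M$ for $W$ zero-dimensional needs justification: a rank-one cokernel of a map of vector bundles can have torsion along a divisor (equivalently, the induced $r+1$ sections of $M$ could share a fixed component), which would degrade the containment to a smaller line bundle; ruling this out uses that $E/N$ is globally generated off a finite set. Finally, for adaptedness~(ii) you invoke an unproven $h^1$-vanishing, whereas the paper's own remark indicates that the constancy of $h^0(C,M|_C)$ is exactly what the stability hypothesis on $E/N$ is designed to guarantee.
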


The condition that $E_{C,A}/N$ be stable is made to ensure that
$\gamma\bigl((\det E_{C,A}/N)|_C\bigr)$ does not depend on the curve
$C\in|H|$, for details see the proof of \cite[Proposition
3.17]{haburcak_2022}. Following her proof of \Cref{conj DM} when
$\gamma(A)=\gamma(C)$, Lelli-Chiesa reduced the condition that
$E_{C,A}/N$ be stable to simply finding a saturated line bundle
$N\hookrightarrow E_{C,A}$, \cite[Proposition 5.1]{Lelli_Chiesa_2015}.

\begin{prop}[{\cite[Proposition 5.1]{Lelli_Chiesa_2015}}]\label{LC prop 5.1}
	Let $A$ be primitive and assume the existence of a globally generated line bundle $N\subseteq E_{C,A}$ which is saturated. Then \Cref{conj DM} holds with $M\colonequals \det(E_{C,A}/N)=H-N$ if $h^1(S,N)=0$, and with $M\colonequals H-\Sigma$ if $c_1(N)=k\Sigma$ for an irreducible elliptic curve $\Sigma\subset S$ and integer $k\ge 2$.
\end{prop}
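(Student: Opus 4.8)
Let me work out how I'd prove this. We have a primitive line bundle $A$ (so both $A$ and $\omega_C \otimes A^\vee$ are basepoint-free) of type $g^r_d$ on $C$, and a saturated, globally generated line bundle $N \hookrightarrow E_{C,A}$. I want to establish the Donagi–Morrison conjecture, producing an explicit adapted lift $M$.

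Let me think about the two cases.

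The quotient $E/N$ is torsion-free rank $r$. Since $N$ is saturated, the quotient is torsion-free. I want to relate $\det(E/N)$ to the restriction of linear systems on $C$.

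**Key structure.** The strategy should mirror Proposition~\ref{Prop Proof Strategy} but without the stability assumption — replacing it with the saturation + global generation hypotheses. Since $N$ is saturated and globally generated, I'd try to show $E/N$ is (a twist of) a generalized LM bundle, so that $M = \det(E/N) = H - N$ gives a genuine lift via the restriction mechanism.

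Here's my plan:

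*First*, set $M \colonequals H - N = \det(E/N)$ (using $\det E = H$ from Proposition~\ref{LM bundle props}). Global generation of $N$ gives $h^0(S,N) \ge 2$, and I need $h^0(S, H-N) \ge 2$ — this follows from the LM bundle having enough sections (rank $r+1 \ge 2$) together with the exact sequence $0 \to N \to E \to E/N \to 0$.

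*Second*, and this is the heart: I want $|A|$ contained in the restriction of $|M|$ to $C$, and $\gamma(M|_C) \le \gamma(A)$. When $h^1(S,N) = 0$, I'd use that the LM bundle sequence
\[
0 \to H^0(C,A)^\vee \otimes \OO_S \to E_{C,A} \to \iota_*(\omega_C \otimes A^\vee) \to 0
\]
restricts well, and combine it with the inclusion $N \hookrightarrow E_{C,A}$. The condition $h^1(S,N)=0$ ensures the cohomology of the quotient behaves, so that $E/N$ is itself a gLM-type bundle and $M|_C$ carries the sections of $A$ (up to the adapted condition). This is where the vanishing $h^1(S,N)=0$ is essential — it makes $h^0(C, M|_C)$ independent of $C$, i.e., $M$ is adapted.

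**The exceptional elliptic case.** When $h^1(S,N) \ne 0$, by Saint-Donat / Riemann–Roch on K3s, a globally generated line bundle $N$ with $h^1 \ne 0$ forces $N = k\Sigma$ for an irreducible elliptic curve $\Sigma$ (with $\Sigma^2 = 0$) and $k \ge 2$. Here the naive $M = H-N$ may fail to be adapted, and instead I'd take $M = H - \Sigma$. I'd need to verify $|A| \subset$ restriction of $|H-\Sigma|$, using that $\Sigma$ "spreads out" the sections coming from the multiple $k\Sigma$, and that the elliptic pencil $|\Sigma|$ contributes the right Clifford-index bound $\gamma(M|_C) \le \gamma(A)$.

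**The main obstacle.** The hard part is the elliptic case $c_1(N) = k\Sigma$: controlling that $H - \Sigma$ (rather than $H - k\Sigma$) still contains $|A|$ in its restriction, and checking the Clifford inequality survives passing from $k\Sigma$ to $\Sigma$. This requires a careful analysis of how the sections of $N = \OO_S(k\Sigma)$ decompose through the elliptic pencil, exactly the phenomenon underlying the Donagi–Morrison counterexamples in [LC, Appendix A]. The case $h^1(S,N)=0$ is more routine — it's essentially the quotient construction of Proposition~\ref{Prop Proof Strategy} with saturation standing in for stability to guarantee the adapted property.
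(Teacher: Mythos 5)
This proposition is imported verbatim from Lelli-Chiesa \cite[Proposition~5.1]{Lelli_Chiesa_2015}; the paper gives no proof of its own, so your attempt can only be measured against the cited source. Measured that way, what you have written is an outline of the known strategy rather than a proof, and the two places you defer are exactly the places where the content lives. First, the central claim --- that $|A|$ is contained in the restriction of $|M|$ to $C$ and that $\gamma(M|_C)\le\gamma(A)$ --- is never argued: you write that the LM sequence ``restricts well'' and that $h^1(S,N)=0$ ``makes the cohomology of the quotient behave,'' but you do not exhibit the mechanism. The actual mechanism restricts $0\to N\to E_{C,A}\to E_{C,A}/N\to 0$ to $C$ and plays it against the canonical exact sequence for $E_{C,A}|_C$ involving $A$ and $\omega_C\otimes A^\vee$: the composite $N|_C\to E_{C,A}|_C\to A$ is analyzed, one obtains an effective decomposition of each divisor in $|A|$ through divisors of $|N|_C|$ and $|M|_C|$, and the Clifford inequality comes from the resulting section count, where $h^1(S,N)=0$ enters through Riemann--Roch on $S$ to compare $h^0(C,N|_C)$ with $h^0(S,N)$. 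Without this step you have not shown $M$ is a Donagi--Morrison lift at all; you have only named it. (Your justification of $h^0(S,H-N)\ge 2$ is likewise asserted, not derived; it needs that $E_{C,A}/N$ is a gLM sheaf with effective determinant, as in \Cref{coker is gLM}.)

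Second, you explicitly leave the case $c_1(N)=k\Sigma$ unresolved, flagging it as ``the main obstacle.'' That case is not an afterthought: it is the reason the proposition replaces $H-N$ by $H-\Sigma$, and it requires showing that the sections of $A$ still factor through the single elliptic pencil $|\Sigma|$ rather than its $k$-th multiple, together with a separate verification that $H-\Sigma$ is adapted and satisfies the Clifford bound. Your identification of when $h^1(S,N)\ne 0$ forces $N=k\Sigma$ ($k\ge 2$) is correct and is the right branch point, but identifying the dichotomy is not the same as closing either branch. As it stands the proposal is a correct roadmap with the two essential verifications missing.
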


The proofs of cases of \Cref{conj DM} have all used this idea. However, finding such a line bundle is generally done by exhibiting $N$ as the maximal destabilizing subsheaf of $E_{C,A}$, as this allows one to use stability arguments and use the structure of $E_{C,A}$. Thus one can ask whether a stronger version of \Cref{conj DM} holds.

\begin{conj}[Strong Donagi--Morrison Conjecture]\label{conj Strong DM}
	Let $(S,H)$ be a polarized K3 surface of genus $g$ and $A$ be a complete basepoint free $g^r_d$ on a smooth irreducible curve $C\in|H|$ with $r\ge 2$ and $\rho(g,r,d)<0$. Then there is a nontrivial line bundle $N\hookrightarrow E_{C,A}$ with $h^0(S,N)\ge 2$ such that $E_{C,A}/N$ is stable.
\end{conj}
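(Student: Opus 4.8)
The plan is to extract the line bundle $N$ from the instability of the Lazarsfeld--Mukai bundle $E=E_{C,A}$, an instability that the hypothesis $\rho(g,r,d)<0$ already forces. Recall from \Cref{LM bundle props} that $E$ has rank $r+1$, $c_1(E)=H$, $c_2(E)=d$, and $\chi(F_{C,A}\otimes E)=2(1-\rho(g,r,d))$. Since $F_{C,A}=E^\vee$, this reads $\chi(E,E)=2(1-\rho)$. On a K3 surface Serre duality gives $\operatorname{Ext}^2(E,E)\cong\operatorname{Hom}(E,E)^\vee$, so $\chi(E,E)=2\operatorname{hom}(E,E)-\operatorname{ext}^1(E,E)$; were $E$ stable it would be simple, and then $\operatorname{ext}^1(E,E)=2-2(1-\rho)=2\rho<0$, which is absurd. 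Hence $\rho<0$ guarantees that $E$ is not $H$-stable, and this is where I would begin.

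First I would analyze the Harder--Narasimhan filtration of $E$ with respect to $H$ (or, when $E$ is $H$-semistable but not stable, a Jordan--Hölder filtration), aiming to extract a saturated line subbundle $N\hookrightarrow E$ with $\mu(N)=N\cdot H>\mu(E)=\tfrac{2g-2}{r+1}$, so that the rank-$r$ quotient $E/N$ has strictly smaller slope. To obtain $h^0(S,N)\ge 2$ I would use that $E$ is globally generated off a finite set (\Cref{LM bundle props}): a saturated line subbundle of maximal $H$-degree inherits enough sections, as in the analysis underlying \Cref{LC prop 5.1}, where the maximal destabilizing subbundle of an LM bundle is shown to be globally generated. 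The numerical bound of \Cref{Prop Proof Strategy}, namely $d<\tfrac{g(r-1)}{r}+\tfrac{2g-2}{r(r+1)}+r-\tfrac1r$, is exactly the sort of input that forces such an $N$ to be nontrivial while keeping its invariants under control.

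It is natural to split along the Clifford index. When $\gamma(A)=\gamma(C)$, Lelli-Chiesa's proof of \Cref{LC Thm 4.2} already produces a destabilizing structure on $E$ realizing the lift, and I would extract from her argument a rank-$1$ saturated $N$ with stable quotient, reducing to the generic-position hypothesis $(\ast)$ appearing there. The genuinely new case is the \emph{non-computing} one, $\gamma(A)>\lfloor\tfrac{g-1}{2}\rfloor$, which occurs only for $g\ge 14$: here I would run a case analysis on the ranks and slopes of the Harder--Narasimhan factors, using $\rho<0$, the Hodge index theorem, and $\Delta(g,r,d)<0$ to eliminate filtrations whose complementary subbundle is not a line bundle, and then to force stability of $E/N$.

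The hard part will be simultaneously controlling the \emph{rank} of the destabilizing subsheaf and the \emph{stability} of the quotient for general $r\ge 2$. Choosing $N$ to be a line subbundle of maximal $H$-degree does not by itself prevent $E/N$ from being destabilized: a subsheaf $Q'\subsetneq E/N$ with $\mu(Q')\ge\mu(E/N)$ pulls back to a higher-rank subsheaf of $E$ of large slope, but this need not contain any line subbundle of degree exceeding $N\cdot H$, so the obvious extremal choice yields no contradiction. For $r=1,2,3$ the admissible Harder--Narasimhan types form a finite list that can be eliminated by hand (\cite{DonagiMorrison_linsystemsonk3}, \cite{Lelli_Chiesa_2013}, \cite{haburcak_2022}), but the combinatorics of admissible slope sequences proliferates with $r$, and once rank-$4$ systems become expected maximal (cf.\ \Cref{remark obtaining bp free g^3_d}) the numerical constraints no longer pin down the filtration. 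Resolving this stability-of-quotient issue uniformly in $r$ is precisely what keeps \Cref{conj Strong DM} open.
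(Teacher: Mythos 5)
The statement you are trying to prove is a \emph{conjecture}, and the paper offers no proof of it; on the contrary, immediately after stating it the paper declares ``As stated, this conjecture is false,'' citing the counterexamples of Knutsen and Lelli-Chiesa in \cite[Appendix A]{Lelli_Chiesa_2015} and \cite[Section 6.7]{haburcak_2022}. So no unconditional argument along the lines you sketch can succeed: there exist polarized K3 surfaces $(S,H)$, curves $C\in|H|$, and complete basepoint free Brill--Noether special $\g{r}{d}$'s $A$ for which no nontrivial line bundle $N\hookrightarrow E_{C,A}$ with $h^0(S,N)\ge 2$ and $E_{C,A}/N$ stable exists. This is precisely why the paper retreats to the bounded version (\Cref{conj Bounded Strong DM}), which imposes $d<\beta$ for a bound depending on $C$ and $S$, and why \Cref{Theorem strong DM implies Mukai} is phrased as a conditional implication (``whenever the conclusion holds'') rather than as a consequence of a proved statement.

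On the substance of your sketch: the opening reduction is fine --- $\chi(E,E)=2(1-\rho)>2$ forces $E_{C,A}$ to be non-simple, hence non-stable --- but everything after that is where the conjecture actually fails, and your own last paragraph concedes as much. The first Harder--Narasimhan factor of $E_{C,A}$ need not have rank one; when it does, it need not be globally generated with $h^0\ge 2$ after saturation (this is an additional hypothesis in \Cref{LC prop 5.1}, not a consequence); and even with such an $N$ in hand, the quotient $E_{C,A}/N$ need not be stable. The numerical constraints $\rho(g,r,d)<0$ and $\Delta(g,r,d)<0$ do not pin down the admissible filtration types for unbounded $d$, which is exactly the degree of freedom the counterexamples exploit. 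If your goal is to recover what the paper actually uses, the correct target is not \Cref{conj Strong DM} itself but \Cref{Lemma strong DM holds}, where the conclusion of the conjecture is verified only for the finitely many non-computing linear systems in genus $14$--$19$ by invoking the bounded lifting theorems of \cite{Lelli_Chiesa_2013} ($r=2$) and \Cref{theorem lifting g3ds general} ($r=3$), together with the reduction of a $\g{4}{d}$ to a $\g{3}{d'}$.
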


As stated, this conjecture is false, see \cite[Appendix
A]{Lelli_Chiesa_2015} and \cite[Section 6.7]{haburcak_2022}. However,
a bounded version as in
\cite{haburcak_2022,Lelli_Chiesa_2013,Lelli_Chiesa_2015,Reid1976} may
be reasonable.

\begin{conj}[Bounded Strong Donagi--Morrison Conjecture]\label{conj Bounded Strong DM}
	Let $(S,H)$ be a polarized K3 surface of genus $g$ and $A$ be a complete basepoint free $g^r_d$ on a smooth irreducible curve $C\in|H|$ with $r\ge 2$ and $\rho(g,r,d)<0$. Then there is a bound $\beta$ depending on $C$ and $S$ such that if $d<\beta$, then there is a line bundle $N\hookrightarrow E_{C,A}$ with $h^0(S,N)\ge 2$ such that $E_{C,A}/N$ is stable.
\end{conj}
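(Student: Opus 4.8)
The plan is to produce the sub-line bundle $N$ by analyzing the $\mu_H$-instability of the Lazarsfeld--Mukai bundle $E=E_{C,A}$, which by \Cref{LM bundle props} has rank $r+1\ge 3$, determinant $c_1(E)=H$, second Chern class $c_2(E)=d$, and is globally generated off a finite set. The whole difficulty is concentrated in forcing a \emph{line} sub-bundle (rather than merely a destabilizing subsheaf) with a \emph{stable} quotient, and the bound $\beta$ exists precisely to make the relevant numerical inequalities go through.

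First I would use $d<\beta$ to force $E$ to be $\mu_H$-unstable. Since $A$ is Brill--Noether special, $\rho(g,r,d)<0$; comparing this with Bogomolov's inequality $2(r+1)\,c_2(E)\ge r\,c_1(E)^2$, valid for any $\mu_H$-semistable sheaf, one finds that below the threshold $d=\tfrac{r(g-1)}{r+1}$ the bundle $E$ cannot be semistable. Taking $\beta$ at most this threshold thus furnishes the first step of the Harder--Narasimhan filtration: a saturated maximal destabilizing subsheaf $F\hookrightarrow E$ that is $\mu_H$-semistable with $\mu_H(F)>\mu_H(E)$ and torsion-free quotient.

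Second, and this is the heart of the argument, I would extract from $F$ a saturated line bundle $N\hookrightarrow E$ with $h^0(S,N)\ge 2$ and $E/N$ stable. The bound $\beta$ is engineered so that the line bundle $N$ we seek satisfies $N.H>0$, hence $h^2(S,N)=h^0(S,-N)=0$, and $N^2\ge 0$; Riemann--Roch on the K3 surface then gives $h^0(S,N)\ge\chi(N)=2+\tfrac12 N^2\ge 2$. When $\mathrm{rk}(F)=1$ we may take $N=F$ directly. When $\mathrm{rk}(F)\ge 2$ the extraction is delicate, since a sub-line bundle of the semistable sheaf $F$ need not have slope exceeding $\mu_H(E)$; here one uses global generation off a finite set together with the Hodge index theorem inside $\langle H,c_1(N)\rangle$ to pin $c_1(N)$ down to a short effective range, and the smallness of $d$ to exclude every sub-line bundle of $E/N$ of slope greater than $\mu_H(E/N)$, which yields stability of the quotient (the semistable case with an exceptional elliptic-curve configuration being handled as in \Cref{LC prop 5.1}). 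Feeding this $N$ into \Cref{Prop Proof Strategy} or \Cref{LC prop 5.1} then produces the Donagi--Morrison lift and verifies \Cref{conj DM} in these cases.

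The main obstacle is exactly the rank-reduction step as $r$ grows. For $r+1\ge 4$ the numerical window left open by $d<\beta$ shrinks, and the Hodge-index constraints no longer suffice to rule out higher-rank destabilizers of $E$, nor to guarantee stability of the rank-$r$ quotient $E/N$; this is why only a \emph{bounded} version is expected and the unbounded Strong conjecture fails. Concretely, in the range $g\le 19$ one proves the statement case-by-case in $r$, the cases $r=2$ and $r=3$ following from the bounded lifting theorems of Lelli-Chiesa and of \cite{haburcak_2022}, while the rank-$4$ systems that are forced in genus $18$ and $19$ are circumvented by lifting an auxiliary rank-$3$ linear system in their place.
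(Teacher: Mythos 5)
The statement you are trying to prove is labeled a \emph{conjecture} in the paper, and the paper offers no proof of it: it only records that the conjecture is \emph{known} for $r=2$ by \cite{Lelli_Chiesa_2013} and for $r=3$ by \cite{haburcak_2022}, and it is open for $r\ge 4$. So there is no internal argument to compare yours against, and any complete ``proof'' of the general statement would be a new result. Your proposal does not supply one: the step you yourself flag as ``the heart of the argument'' --- passing from the maximal destabilizing subsheaf $F$ of the Harder--Narasimhan filtration to a \emph{line} bundle $N\hookrightarrow E_{C,A}$ with $h^0(S,N)\ge 2$ \emph{and} with $E_{C,A}/N$ stable --- is asserted, not carried out. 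When $\operatorname{rk}(F)\ge 2$ you say one ``pins down $c_1(N)$'' via Hodge index and ``excludes every sub-line bundle of $E/N$ of large slope,'' but this is a description of what would need to be true, not an argument; it is exactly the content that makes the conjecture open in higher rank. Even in the case $\operatorname{rk}(F)=1$, taking $N=F$ only gives that the quotient's HN slopes are smaller than $\mu_H(F)$, which does not yield stability of $E/N$. Likewise, the properties $N.H>0$ and $N^2\ge 0$ that you invoke to get $h^0(S,N)\ge 2$ via Riemann--Roch are not automatic for a destabilizing sub-line bundle and themselves require the delicate case analysis occupying the cited papers.

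Two further points. First, your opening step misstates the relation between the Brill--Noether and Bogomolov inequalities: $\rho(g,r,d)<0$ is equivalent to $d<\frac{rg}{r+1}+r$, which is strictly weaker than the Bogomolov threshold $d<\frac{r(g-1)}{r+1}$ for $E_{C,A}$ of rank $r+1$; instability of $E_{C,A}$ does not follow from Brill--Noether speciality, only from imposing the smaller bound through $\beta$ (the standard route in the literature instead uses $\chi(E_{C,A}^\vee\otimes E_{C,A})=2(1-\rho)>2$ to produce a nontrivial endomorphism). Second, your final paragraph correctly identifies where the paper actually stands --- the cases $r=2,3$ are quoted from \cite{Lelli_Chiesa_2013} and \cite{haburcak_2022}, and the rank-$4$ systems in genus $18$, $19$ are circumvented by an auxiliary $\g{3}{d'}$ --- but that is an account of how the paper \emph{uses} the conjecture in \Cref{Lemma strong DM holds}, not a proof of the conjecture itself.
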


This conjecture is verified when $r=2$ in \cite{Lelli_Chiesa_2013} and when $r=3$ in \cite{haburcak_2022}. For completeness, we recall the bounds in the $r=3$ case.

\begin{theorem}[{\cite[Theorem 5.1]{haburcak_2022}}]\label{theorem lifting g3ds general}
	Let $(S,H)$ be a polarized K3 surface of genus $g\neq
        2,3,4,8$, $C\in |H|$ a smooth irreducible curve of Clifford
        index $\gamma$, and $A$ a $\g{3}{d}$ on
        $C$. Let $$m\colonequals \min\{D^2 ~\vert~ D\in\Pic(S),~
        D^2\ge 0, \text{ $D$ is effective}\}$$ (i.e. there are no
        curves of genus $g^\prime<\frac{m+2}{2}$ on $S$),
        and $$\mu\colonequals\min\{\mu(D) ~\vert~ D\in\Pic(S),~
        D^2\ge0,~ \mu(D)>0\}.$$
        If $$d<\min\left\{\frac{5}{4}\gamma+\frac{\mu+m+9}{2},~\frac{5}{4}\gamma+\frac{m}{2}+5,~
        \frac{3}{2}\gamma+5,~
        \frac{\gamma+g-1}{2}+4\right\},$$ then \Cref{conj
        Strong DM} holds for $A$. In particular, there is a line
        bundle $M\in\Pic(S)$ adapted to $|H|$ such that $|A|$ is
        contained in the restriction of $|M|$ to $C$ and $\gamma(M|_C)\le\gamma(A)$. Moreover, one has $c_1(M).C\le\frac{3g-3}{2}$.
\end{theorem}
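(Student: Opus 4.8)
The plan is to manufacture a globally generated, saturated line bundle $N\hookrightarrow E_{C,A}$ to which \Cref{LC prop 5.1} applies, so that $M\colonequals H-N$ (or $M=H-\Sigma$ in the elliptic case) is the sought Donagi--Morrison lift, while simultaneously arranging $E_{C,A}/N$ to be stable so that \Cref{Prop Proof Strategy} delivers \Cref{conj Strong DM}. First I would form the rank-$4$ Lazarsfeld--Mukai bundle $E=E_{C,A}$ and record its invariants from \Cref{LM bundle props}: $c_1(E)=H$, $c_2(E)=d$, and $\chi(E^\vee\otimes E)=2(1-\rho(g,3,d))$. Since $A$ is Brill--Noether special, $\rho(g,3,d)<0$, so $\hom(E,E)+\operatorname{ext}^2(E,E)\ge\chi(E^\vee\otimes E)>2$; by Serre duality on the K3 surface $\operatorname{ext}^2(E,E)=\hom(E,E)$, and were $E$ simple this would force $\operatorname{ext}^1(E,E)=2\rho(g,3,d)<0$, a contradiction. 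Hence $E$ is not stable and admits a proper destabilizing subsheaf, i.e. its Harder--Narasimhan--Jordan--H\"older filtration with respect to $\mu_H$ is nontrivial.

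Next I would analyze that filtration. Writing $N_1$ for the maximal destabilizing subsheaf, $N_1$ is $\mu_H$-semistable and saturated with $\mu_H(N_1)>\mu_H(E)=\tfrac{g-1}{2}$; dually, since $E$ is globally generated off a finite set, so is the minimal destabilizing quotient, whence its first Chern class is effective and the slopes appearing in the filtration are positive. The goal is to force the relevant destabilizing factor to have rank $1$. Whenever a semistable factor has rank $\ge 2$, I would invoke the Bogomolov inequality $2\operatorname{rk}\cdot c_2-(\operatorname{rk}-1)c_1^2\ge 0$ on the K3 surface together with the Hodge index theorem to bound its discriminant, and then use the hypotheses on $d$ to exclude such configurations: each of the four bounds $d<\tfrac54\gamma+\tfrac{\mu+m+9}{2}$, $d<\tfrac54\gamma+\tfrac m2+5$, $d<\tfrac32\gamma+5$, $d<\tfrac{\gamma+g-1}{2}+4$ controls one family of rank/slope profiles for the factors, with $m$ and $\mu$ entering precisely because any effective class $D$ with $D^2\ge 0$ satisfies $D^2\ge m$ and $D.H\ge\mu$. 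Running this case analysis over the rank partitions of $4$ should reduce us to a saturated line bundle $N\hookrightarrow E$ with $h^0(S,N)\ge 2$ and $E/N$ stable, or else to the case $c_1(N)=k\Sigma$ for an irreducible elliptic curve $\Sigma$.

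With such $N$ in hand, \Cref{LC prop 5.1} produces $M=H-N$ (resp.\ $M=H-\Sigma$) adapted to $|H|$ with $|A|$ contained in the restriction of $|M|$ to $C$ and $\gamma(M|_C)\le\gamma(A)$, and the stability of $E/N$ is exactly the input to \Cref{Prop Proof Strategy}, giving \Cref{conj Strong DM} for $A$. The final numerical bound is essentially forced: since $N$ destabilizes $E$ we have $N.H>\mu_H(E)=\tfrac{g-1}{2}$, so $c_1(M).C=(H-N).H=(2g-2)-N.H<\tfrac{3(g-1)}{2}$, and accounting for integrality yields $c_1(M).C\le\tfrac{3g-3}{2}$.

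The hard part will be the rank-$2$ and rank-$3$ intermediate factors in the filtration of the rank-$4$ bundle $E$: unlike the $r\le 2$ cases, the destabilizing subsheaf of a rank-$4$ Lazarsfeld--Mukai bundle need not be a line bundle, and ruling out or decomposing these higher-rank semistable pieces---while guaranteeing that $E/N$ is genuinely stable and not merely semistable---is where the four bounds must be played against one another. The exceptional geometry responsible for the failure of \Cref{conj Strong DM} in general (an irreducible elliptic $\Sigma$ with $\Sigma.C$ small, or a genus-$2$ curve $B$ with $B.C=6$, cf.\ \Cref{LC Thm 4.2}) resurfaces here as the boundary cases of these inequalities, and the excluded genera $g=2,3,4,8$ are exactly the low-genus degeneracies where the bounds become vacuous or the Clifford-index hypotheses fail.
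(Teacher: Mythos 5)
First, a point of comparison: the paper does not prove \Cref{theorem lifting g3ds general} at all --- it is imported verbatim as \cite[Theorem~5.1]{haburcak_2022} --- so the only thing to measure your proposal against is the proof in that reference. Your outline does correctly reconstruct its architecture: non-vanishing of $\chi(E^\vee\otimes E)=2(1-\rho)$ forces $E_{C,A}$ to be non-simple and hence non-stable, one passes to the Harder--Narasimhan/Jordan--H\"older filtration, reduces to a saturated globally generated line subbundle $N$ with stable quotient, and then \Cref{LC prop 5.1} and \Cref{Prop Proof Strategy} yield the Donagi--Morrison lift $M=H-N$; your derivation of $c_1(M).C\le\frac{3g-3}{2}$ from $\mu_H(N)>\mu_H(E)=\frac{g-1}{2}$ is also the right one.

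The genuine gap is that the entire content of the theorem --- why those four specific bounds on $d$ suffice --- is left as a black box. Saying that ``each of the four bounds controls one family of rank/slope profiles'' and that one should ``run this case analysis over the rank partitions of $4$'' names the problem rather than solving it: for a rank-$4$ Lazarsfeld--Mukai bundle the filtration can have semistable factors of ranks $(1,3)$, $(2,2)$, $(3,1)$, $(1,1,2)$, etc., the destabilizing subsheaf need not be a line bundle, and the quotient by the chosen $N$ must be shown to be genuinely \emph{stable} (not merely semistable) for \Cref{Prop Proof Strategy} to apply. Extracting the constants $\frac{5}{4}\gamma+\frac{\mu+m+9}{2}$, $\frac{5}{4}\gamma+\frac{m}{2}+5$, $\frac{3}{2}\gamma+5$, $\frac{\gamma+g-1}{2}+4$ from Bogomolov-type inequalities, the Hodge index theorem, and the definitions of $m$ and $\mu$ occupies the bulk of \cite[\S4--5]{haburcak_2022}, and none of that computation appears in your proposal; nor is it explained why $h^0(S,N)\ge 2$ can be guaranteed, which is not automatic from effectivity of $c_1(N)$. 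Your closing guess that $g=2,3,4,8$ are excluded because ``the bounds become vacuous or the Clifford-index hypotheses fail'' is also not a proof and is dubious for $g=8$, which is excluded for a specific numerical degeneracy in the rank-$2$ analysis rather than for being low genus. As it stands the proposal is a correct road map of the known proof, not a proof.
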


We now collect some useful facts about quotients of Lazarsfeld--Mukai bundles.

\begin{lemma}[{\cite[Lemma 3.8]{haburcak_2022}}]\label{coker is gLM}
	Let $N\in \Pic(S)$ be nontrivial and globally generated with
        $h^0(S,N) \ne 0$. Let $E=E_{C,A}$ and suppose we have a short exact sequence \[\xymatrix{0\ar[r] & N \ar[r] & E \ar[r] & E/N \ar[r] & 0}\] with $E/N$ torsion free. Then $E/N$ satisfies $h^1(S,E/N)=h^2(S,E/N)=0$. If $A$ is primitive, then $E/N$ is a gLM bundle of type (II). If we further assume that $E/N$ is locally free, then it is a LM bundle for a smooth irreducible curve $D\in|H-N|$. If $A$ is not primitive and $E/N$ is assumed locally free, then $E/N$ is a gLM bundle of type (I). In any of the above cases, we have
	\begin{itemize}
		\item $M\colonequals c_1(E/N)=H-N$;
		\item $c_2(E/N)=d+N^2-H. N=d+M^2-H.M$;
		\item $\gamma(E/N)=\gamma(E_{C,A})+M^2 - H. M+2$.
	\end{itemize}
\end{lemma}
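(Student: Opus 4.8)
The plan is to establish the cohomological vanishing first, deduce the structural (gLM) claims by a short case analysis on the hypotheses, and finally read off the Chern-class identities from multiplicativity of the total Chern class.

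First I would run the long exact sequence in cohomology attached to $0\to N\to E\to E/N\to 0$. Since $S$ is a surface, $H^3(S,N)=0$, so $H^2(S,E/N)$ is a quotient of $H^2(S,E)$, which vanishes by \Cref{LM bundle props}; hence $h^2(S,E/N)=0$. For the first cohomology, the exact piece $H^1(S,E)\to H^1(S,E/N)\to H^2(S,N)\to H^2(S,E)$ together with $h^1(S,E)=h^2(S,E)=0$ (again \Cref{LM bundle props}) gives $H^1(S,E/N)\cong H^2(S,N)$. The key input is then $H^2(S,N)=0$: by Serre duality on the K3 surface, $H^2(S,N)\cong H^0(S,-N)^\vee$, and since $N$ is globally generated, nontrivial, and effective, $-N$ is not effective, so $H^0(S,-N)=0$. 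This yields $h^1(S,E/N)=0$.

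Next I would identify the gLM type. By \Cref{LM bundle props}, $E$ is locally free and globally generated off the (finite) base locus of $\iota_\ast(\omega_C\otimes A^\vee)$; a quotient of a sheaf generated off a finite set is again generated off a finite set, and a quotient of a globally generated sheaf is globally generated. If $A$ is primitive, then $\omega_C\otimes A^\vee$ is basepoint free, so $E$, and hence $E/N$, is globally generated; combined with $h^2(S,E/N)=0$ and torsion-freeness this is exactly a gLM bundle of type (II). If moreover $E/N$ is locally free, it is simultaneously of type (I) and (II), so \Cref{remark glm is lm bundle} realizes it as the LM bundle associated to a basepoint-free series on a smooth irreducible curve $D\subset S$; since $h^1(S,E/N)=0$ that series is complete, and since $\det(E/N)=c_1(E/N)$ is the class of $D$, the computation below forces $D\in|H-N|$. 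If instead $A$ is not primitive but $E/N$ is locally free, then $E/N$ is locally free and globally generated off a finite set, i.e. of type (I).

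The numerical identities follow from $c(E)=c(N)\,c(E/N)$ with $c(N)=1+N$. Comparing in degree $2$ gives $c_1(E/N)=c_1(E)-N=H-N=:M$ (using $c_1(E)=[C]=H$), and in degree $4$ gives $c_2(E)=c_2(E/N)+N.M$; since $c_2(E)=\deg A=d$ and $N=H-M$, this rearranges to $c_2(E/N)=d+N^2-H.N=d+M^2-H.M$. As $\operatorname{rk}(E/N)=\operatorname{rk}(E)-1=r$, the definition of the Clifford index of a gLM bundle gives $\gamma(E/N)=c_2(E/N)-2(r-1)=\gamma(E_{C,A})+M^2-H.M+2$, using $\gamma(E_{C,A})=d-2r$. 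I expect the genuinely delicate step to be not the arithmetic but the structural one: correctly matching each hypothesis (primitivity of $A$, local freeness of $E/N$) to the right gLM type and invoking the converse \Cref{remark glm is lm bundle} to recognize $E/N$ as an honest LM bundle of a curve in $|H-N|$.
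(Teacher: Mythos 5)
Your argument is correct and follows essentially the same route as the proof this paper cites from \cite{haburcak_2022}: the long exact sequence in cohomology together with Serre duality and the effectivity of $N$ for the vanishing of $h^1$ and $h^2$, the Whitney formula for the Chern class identities, and the global generation of quotients plus \Cref{remark glm is lm bundle} for the gLM/LM dichotomy. No gaps to report.
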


\begin{remark}
	If $A$ is of type $\g{r}{d}$ and $M=H-N$ is a lift of $A$ with $M^2=2r-2$, then the last equality gives $\gamma(E/N)=\gamma(A)+(2r-2)-d+2=0$.
\end{remark}
\begin{remark}\label{bounded c_2}
	The proof of \Cref{coker is gLM} also shows that if $A$ is primitive and $N\subset E=E_{C,A}$ is any subsheaf (not necessarily a line bundle) such that $E/N$ is torsion free (e.g. obtained through a Harder--Narasimhan filtration), then $E/N$ is a gLM bundle of type (II). Moreover, by \cite[Proposition 2.7]{Lelli_Chiesa_2015}, if $c_1(E/N)^2=0$, then $c_2(E/N)=0$.
\end{remark}

\section{Donagi--Morrison lifts obtained from quotients}\label{section DM lifts from quotients}

The Donagi--Morrison lift of a $\g{r}{d}$ may have a different type, as in \Cref{General 2r-2}. However, if the lift is obtained as the determinant of a quotient of a LM bundle $E_{C,A}$ as in \cite{haburcak_2022,Lelli_Chiesa_2013,Lelli_Chiesa_2015}, there are additional numerical restrictions that hold. Namely, if $M$ is a Donagi--Morrison lift of a $\g{r}{d}$ of type $\g{r^\prime}{d^\prime}$, then $r^\prime$ is constrained by $r$ and the Clifford index of the quotient of $E_{C,A}$. Throughout this section, we restrict ourselves to the case of \Cref{conj Strong DM}. That is, $A$ is a complete basepoint free Brill--Noether special $\g{r}{d}$ on $C$ and there is a short exact sequence \[0\to N\to E_{C,A} \to E \to 0\] where $N$ is a nontrivial line bundle with $h^0(S,N)\ge 2$.

\begin{remark}
	We briefly recall some facts about the moduli space of stable and semistable sheaves on K3 surfaces from \cite{huybrechts_2016}. Let $E$ be a coherent sheaf, and \[v(E)=ch(E)\sqrt{\operatorname{td}(S)}=(\operatorname{rk}(E), c_1(E), \chi(E)-\operatorname{rk}(E))=(\operatorname{rk}(E),c_1(E), ch_2(E)+\operatorname{rk}(E))\] be its Mukai vector, where $\operatorname{td}(S)$ is the Todd class of $S$ and $ch_2(E)=\frac{c_1(E)^2-2c_2(E)}{2}$ is the part of the Chern character of $E$ in $H^4(S,\mathbb{Z})$. The moduli space of stable coherent sheaves with Mukai vector $v(E)$ is empty or has dimension $2+\langle v(E), v(E)\rangle$. Thus when $E$ is a stable coherent sheaf \[\langle v(E), v(E)\rangle=\left(1-\operatorname{rk}(E)\right)c_1(E)^2+2\operatorname{rk}(E)c_2(E)-2\operatorname{rk}(E)^2\ge -2 .\]
\end{remark}

\begin{prop}\label{prop dm lift restrictions if quotient is stable}
	Let $M=\det(E)$ be of type $\g{r^\prime}{d^\prime}$, and suppose that $E$ is stable. Then
	\begin{enumerate}[label=\normalfont(\roman*)]
		\item $\gamma(M)\colonequals d^\prime-2r^\prime\le \gamma(A)+r-r^\prime+\frac{r^\prime}{r}-1$, and
		\item $c_2(E)\ge \frac{(\operatorname{rk}(E)-1)c_1(E)^2}{2\operatorname{rk}(E)}+\operatorname{rk}(E)-\frac{1}{\operatorname{rk}(E)}$.
	\end{enumerate}
\end{prop}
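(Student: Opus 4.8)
The plan is to extract both inequalities from the single stability bound $\langle v(E),v(E)\rangle\ge -2$ recorded in the remark preceding the statement, which holds because $E$ is stable and the moduli space of stable sheaves with its Mukai vector has nonnegative dimension $2+\langle v(E),v(E)\rangle$. Everything then reduces to computing the invariants of $E$ from the short exact sequence and translating this bound into Clifford-index language.

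Part (ii) should be immediate. Writing out the Mukai self-pairing
\[
\langle v(E),v(E)\rangle=(1-\operatorname{rk}(E))c_1(E)^2+2\operatorname{rk}(E)c_2(E)-2\operatorname{rk}(E)^2\ge -2
\]
and solving for $c_2(E)$ gives exactly
\[
c_2(E)\ge \frac{(\operatorname{rk}(E)-1)c_1(E)^2}{2\operatorname{rk}(E)}+\operatorname{rk}(E)-\frac{1}{\operatorname{rk}(E)},
\]
so no further input is needed.

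For part (i) I would first record the invariants of $E$ coming from $0\to N\to E_{C,A}\to E\to 0$. Since $N$ is a line bundle and $\operatorname{rk}(E_{C,A})=r+1$ by \Cref{LM bundle props}, we get $\operatorname{rk}(E)=r$ and $c_1(E)=H-N=M$, so $c_1(E)^2=M^2=2r'-2$. From Whitney's formula $c(E_{C,A})=c(N)c(E)$ one computes $c_2(E)=d+M^2-H.M=d+2r'-2-d'$, in agreement with \Cref{coker is gLM}; hence the Clifford index of the quotient satisfies $\gamma(E)=c_2(E)-2(\operatorname{rk}(E)-1)=(d-2r)-(d'-2r')=\gamma(A)-\gamma(M)$, equivalently $c_2(E)=\gamma(A)-\gamma(M)+2(r-1)$. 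Substituting $\operatorname{rk}(E)=r$ and $c_1(E)^2=2r'-2$ into the inequality from (ii) and replacing $c_2(E)$ by this expression yields
\[
\gamma(A)-\gamma(M)+2(r-1)\ge \frac{(r-1)(r'-1)}{r}+r-\frac{1}{r}.
\]
Isolating $\gamma(M)$ and simplifying $(r-1)(r'-1)/r=r'-1-r'/r+1/r$ collapses the right-hand side to $\gamma(A)+r-r'+r'/r-1$, which is (i).

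The computations are routine, so there is no genuine analytic obstacle; the only thing to be careful about is the bookkeeping of conventions. Concretely, one must track that ``$M$ of type $\g{r'}{d'}$'' means $M^2=2r'-2$ and $H.M=d'$ (so that $c_1(E)^2=2r'-2$ is what enters (ii)), and that the quotient has rank exactly $r$ rather than $r+1$. The useful observation organizing the proof is that both stated inequalities are two faces of the same Bogomolov-type bound: (ii) is its raw form, and (i) is its translation into Clifford-index language via \Cref{coker is gLM}.
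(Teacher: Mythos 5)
Your proposal is correct and follows essentially the same route as the paper: both inequalities are extracted from the bound $\langle v(E),v(E)\rangle\ge -2$ for the stable sheaf $E$, using $\operatorname{rk}(E)=r$, $c_1(E)^2=2r'-2$, and $c_2(E)=d+2r'-2-d'$ from \Cref{coker is gLM}. The only cosmetic difference is that you derive (i) by specializing (ii), whereas the paper computes (i) directly from the Mukai pairing and notes (ii) as a rearrangement; the underlying computation is identical.
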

\begin{proof}
	As $E$ is stable, \[\langle v(E), v(E)\rangle=(1-\operatorname{rk}(E))c_1(E)^2+2\operatorname{rk}(E)c_2(E)-2\operatorname{rk}(E)^2\ge -2.\] Noting that $c_2(E)=c_2(E_{C,A})-d^\prime+2r^\prime-2$, we compute 
	\begin{align*}
		&(1-r)(2r^\prime -2)+(2r)(d-d^\prime+2r^\prime -2)-2r^2\ge -2\\
		\iff& r^\prime -rr^\prime -r+rd-rd^\prime +2rr^\prime -r^2 \ge 0\\
		\iff& rr^\prime+r-rd+rd^\prime -2rr^\prime +r^2 \le r^\prime\\
		\iff& d^\prime -r^\prime -d+r\le\frac{r^\prime}{r}-1\\
		\iff& \gamma(M)-\gamma(A) \le r-r^\prime +\frac{r^\prime}{r}-1,
	\end{align*}
from which (i) follows.

Finally, (ii) follows by rearranging $\langle v(E),v(E)\rangle\ge -2$.
\end{proof}
\begin{remark}\label{remark dm lift restrictions if quotient is stable}
	If $\operatorname{rk}(E)=3$ and $c_1(E)^2\ge 2$, then $c_2(E)\ge 4$. Since $c_2(E)=c_2(E_{C,A})-\gamma(M)-2$, if $c_2(E)\ge 3$ then $\gamma(M)\le c_2(E_{C,A})-6$, which is the Clifford index of a $\g{3}{d}$. This observation will be useful later when we consider lifts of non-primitive $\g{3}{d}$s.
\end{remark}

\subsection{Brill--Noether special K3 surfaces from stable quotients}\label{subsection strong DM implies Mukai}

We show that \Cref{conj Strong DM} implies a positive answer for \Cref{Mukai Conj}. While \Cref{conj Strong DM} is false, this still shows that whenever the conclusion holds, in particular when \Cref{conj Bounded Strong DM} holds then \Cref{Mukai Conj} has a positive answer. We show that under these assumption that \Cref{conj Strong DM} holds, $\langle H, \det(E) \rangle$ is a Brill--Noether special marking on $\Pic(S)$. Throughout, we let $M\colonequals\det (E)$ be of type $\g{r^\prime}{d^\prime}$, and we write $\gamma(M)=d^\prime - 2r^\prime$.

We first reduce to the case that $\gamma(A)>\gamma(C)=\left\lfloor \frac{g-1}{2} \right\rfloor$. Arguing by the Clifford index allows us to freely assume that $A$ is complete and basepoint free, even that $A$ is primitive (though this is not required), as we only require $A$ to becomplete and basepoint free in order to use \Cref{Prop Proof Strategy} to see that $M$ is a Donagi--Morrison lift of $A$. When $\gamma(A)\le\left\lfloor \frac{g-1}{2}\right\rfloor$, the lifting results of Lelli-Chiesa \cite[Theorem 4.2]{Lelli_Chiesa_2015} and Knutsen \cite[Lemma 8.3]{Knutsen2001} give a Brill--Noether special marking on $\Pic(S)$. We note that this completely deals with the case that $A$ is of type $\g{1}{d}$. Thus we assume that $r\ge 2$ and $\gamma(A)>\gamma(C)=\left\lfloor \frac{g-1}{2} \right\rfloor$ for the remainder of this section. We begin with more constraints on the Donagi--Morrison lifts coming from stable quotients of $E_{C,A}$. 

\begin{lemma}
	We have $r^\prime \ge r$.
\end{lemma}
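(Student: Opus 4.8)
The plan is to argue purely numerically, by writing down two different expressions for $c_2(E)$ — one coming from the geometry of the quotient (\Cref{coker is gLM}) and one coming from the stability of $E$ (\Cref{prop dm lift restrictions if quotient is stable}(ii)) — and playing them against each other. First I would record the invariants of $E=E_{C,A}/N$. Since $\operatorname{rk}(E_{C,A})=r+1$ and $N$ is a line bundle, $\operatorname{rk}(E)=r$; and by \Cref{coker is gLM} we have $c_1(E)=M=H-N$ with $M^2=2r'-2$ and $H.M=d'$, together with
\[
c_2(E)=d+M^2-H.M=d+(2r'-2)-d'.
\]

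Next I would use the fact, established just before the lemma via \Cref{Prop Proof Strategy}, that $M$ is a Donagi--Morrison lift of $A$, hence in particular a \emph{potential} Donagi--Morrison lift; this gives the key inequality $d'=d(M|_C)\ge d(A)=d$. Substituting into the formula above yields the upper bound $c_2(E)\le 2r'-2$. On the other hand, because $E$ is stable of rank $r$ with $c_1(E)^2=M^2=2r'-2$, the moduli-space dimension bound $\langle v(E),v(E)\rangle\ge-2$ packaged in \Cref{prop dm lift restrictions if quotient is stable}(ii) produces the lower bound
\[
c_2(E)\ge\frac{(r-1)(2r'-2)}{2r}+r-\frac1r=\frac{(r-1)(r'-1)}{r}+r-\frac1r.
\]

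Combining the two bounds on $c_2(E)$ leaves a single inequality in $r$ and $r'$ alone. Clearing denominators gives $(r-1)(r'-1)+r^2-1\le 2r(r'-1)$; here the term $r^2-1=(r-1)(r+1)$ factors conveniently, so after collecting terms the whole inequality collapses to $(r+1)(r'-r)\ge 0$. Since $r\ge 2$, this forces $r'\ge r$, as desired.

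The only genuinely non-formal step — and the one I expect to be the main obstacle — is justifying $d'\ge d$, i.e.\ confirming that the stable quotient really does yield a \emph{potential} Donagi--Morrison lift (the condition $d(M|_C)\ge d(A)$, not merely $\gamma(M|_C)\le\gamma(A)$). Everything else is bookkeeping with Chern classes and one short algebraic manipulation. It is worth sanity-checking the edge behavior (e.g.\ $M^2<0$, i.e.\ $r'\le 0$), but the combined inequality handles these automatically: substituting small $r'$ makes it fail, which simply confirms that such values cannot occur.
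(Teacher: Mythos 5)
Your proof is correct and rests on exactly the same two ingredients as the paper's: the stability bound $\langle v(E),v(E)\rangle\ge-2$ (you invoke it via part (ii) of \Cref{prop dm lift restrictions if quotient is stable}, the paper via part (i) — both are rearrangements of the same inequality) and the fact that $d'\ge d$ because $M$ is a Donagi--Morrison lift. The only difference is packaging: you chain two bounds on $c_2(E)$ directly, while the paper argues by contradiction through Clifford indices; your worry about justifying $d'\ge d$ is resolved exactly as you suspect, since the paper likewise just cites the (potential) Donagi--Morrison lift property.
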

\begin{proof}
	From \Cref{prop dm lift restrictions if quotient is stable} (i) and the fact that $c_2(E)=c_2(E_{C,A})-\gamma(M)-2$, we have \[\gamma(M)\le \min \left\{\gamma(A)+r-r^\prime -1+ \left\lfloor \frac{r^\prime}{r} \right\rfloor,~ \gamma(A)+r-2+\left\lfloor\frac{1}{r}\right\rfloor \right\}.\] Since $r\ge 2$, we have \[\gamma(M)\le \min \left\{\gamma(A)+r-r^\prime -1+ \left\lfloor \frac{r^\prime}{r} \right\rfloor,~ \gamma(A)+r-2\right\}.\]
	
	Suppose for contradiction that $r^\prime< r$, and write $r^\prime = r-\delta$ for some positive integer $\delta$. Thus $\gamma(M)\le \gamma(A)+\delta-1$. Moreover, since $M$ is a Donagi--Morrison lift of $A$ we have $d\le d^\prime$, thus \[\gamma(A)+2\delta = d-2r+2\delta = d-2r^\prime \le d^\prime - 2r^\prime = \gamma(M).\] Therefore $\gamma(A)+2\delta \le \gamma(M) \le \gamma(A)+\delta-1$, and thus $\delta\le -1$, which is a contradiction. Thus $r^\prime \ge r$, as desired.
\end{proof}

\begin{remark}
	We write $r^\prime=r+\delta$ for the remainder of this section.
\end{remark}

\begin{lemma}
	We have $\gamma(M)\le \gamma(A)$.
\end{lemma}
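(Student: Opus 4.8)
The plan is to feed the numerical bound already extracted in the proof of the preceding lemma directly into the substitution $r^\prime = r + \delta$ with $\delta \ge 0$. Recall that from \Cref{prop dm lift restrictions if quotient is stable}(i) together with the identity $c_2(E) = c_2(E_{C,A}) - \gamma(M) - 2$, and using $r \ge 2$, one obtains
\[
\gamma(M) \le \gamma(A) + r - r^\prime - 1 + \left\lfloor \frac{r^\prime}{r}\right\rfloor .
\]
This is the first of the two terms in the minimum appearing in the previous proof, and it is the one I would use here; the second term $\gamma(A) + r - 2$ is too weak for small $\delta$ and plays no role.

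Next I would carry out the substitution $r^\prime = r + \delta$. Since $r \mid r$, the floor splits cleanly as $\left\lfloor \frac{r+\delta}{r}\right\rfloor = 1 + \left\lfloor \frac{\delta}{r}\right\rfloor$, so the bound collapses to
\[
\gamma(M) \le \gamma(A) + r - (r+\delta) - 1 + 1 + \left\lfloor \frac{\delta}{r}\right\rfloor = \gamma(A) - \delta + \left\lfloor \frac{\delta}{r}\right\rfloor .
\]
It then remains to check that the correction term $-\delta + \left\lfloor \frac{\delta}{r}\right\rfloor$ is nonpositive. Because $\delta \ge 0$ (this is exactly the content of the previous lemma, $r^\prime \ge r$) and $r \ge 2 \ge 1$, we have $\left\lfloor \frac{\delta}{r}\right\rfloor \le \frac{\delta}{r} \le \delta$, whence $-\delta + \left\lfloor \frac{\delta}{r}\right\rfloor \le 0$ and therefore $\gamma(M) \le \gamma(A)$.

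There is no genuine obstacle here: the statement is a one-line consequence of the stability inequality $\langle v(E), v(E)\rangle \ge -2$ once the floor is simplified. The only point requiring a moment of care is making sure the integrality in \Cref{prop dm lift restrictions if quotient is stable}(i) is handled correctly — that is, that one keeps the floor $\left\lfloor \frac{r^\prime}{r}\right\rfloor$ rather than the rational quantity $\frac{r^\prime}{r}$ — and that one selects the sharper first branch of the minimum rather than the second. With those two bookkeeping points in hand, the inequality follows immediately.
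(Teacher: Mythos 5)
Your proof is correct and follows essentially the same route as the paper: both start from the bound $\gamma(M)\le \gamma(A)+r-r^\prime-1+\left\lfloor \frac{r^\prime}{r}\right\rfloor$ from \Cref{prop dm lift restrictions if quotient is stable}, substitute $r^\prime=r+\delta$ to get $\gamma(M)\le\gamma(A)-\delta+\left\lfloor\frac{\delta}{r}\right\rfloor$, and conclude via $\left\lfloor\frac{\delta}{r}\right\rfloor\le\delta$ for $\delta\ge 0$. Your only addition is spelling out the floor identity $\left\lfloor\frac{r+\delta}{r}\right\rfloor=1+\left\lfloor\frac{\delta}{r}\right\rfloor$, which the paper leaves implicit.
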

\begin{proof}
	We have $\gamma(M)\le \gamma(A)+r-r^\prime -1+\left\lfloor \frac{r^\prime}{r}\right\rfloor$, from which it follows that $\gamma(M)\le \gamma(A)-\delta + \left\lfloor\frac{\delta}{r}\right\rfloor$. Since $r\ge 2$ and $\delta\ge 0$, clearly $\left\lfloor\frac{\delta}{r}\right\rfloor \le \delta$, from which the result follows.
\end{proof}

We summarize useful inequalities and formulae for Donagi--Morrison lifts of complete basepoint free line linear systems coming from \Cref{conj Strong DM}.
\begin{align}
	\gamma(M)&\le \gamma(A)-\delta+\left\lfloor\frac{\delta}{r}\right\rfloor \label{eqn_gammaM<gammaA+del} \\
	d^\prime &\le d+\delta + \floor{\frac{\delta}{r}} \label{eqn_dprime<d+del}\\
	\rho(g,r^\prime, d^\prime) &= \rho(g,r,d)+\delta\left( d^\prime-2r-\delta -g-1\right)
\end{align}
Substituting \Cref{eqn_dprime<d+del} gives 
\begin{align}
	\rho(g,r^\prime, d^\prime)\le \rho(g,r,d)+\delta\left( \gamma(A)-g-1+\floor{\frac{\delta}{r}} \right). \label{eqn_rho(g,rprime, dprime)<rho+del}
\end{align}

\begin{prop}
	We have $\gamma(A)-g-1+\floor{\frac{\delta}{r}}<0$.
\end{prop}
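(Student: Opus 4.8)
The plan is to recast the quantity in a more transparent form and then bound $\floor{\delta/r}$. Since $\gamma(A)=d-2r$ and $h^1(C,A)=g-d+r$, one has $\gamma(A)-g=-(h^1(C,A)+r)$, so
\[
\gamma(A)-g-1+\floor{\frac{\delta}{r}}=\floor{\frac{\delta}{r}}-\bigl(h^1(C,A)+r+1\bigr),
\]
and the statement is equivalent to the clean inequality $\floor{\delta/r}\le h^1(C,A)+r$. Everything therefore reduces to an upper bound on $\floor{\delta/r}$.

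The key input is a two-sided estimate on $\gamma(M)$. On one side, $M$ is a Donagi--Morrison lift and hence adapted to $|H|$, so $M|_C$ contributes to the Clifford index of $C$ and $\gamma(M)\ge\gamma(C)=\floor{\frac{g-1}{2}}$; this is the step I expect to be the crux, as the weaker bound $\gamma(M)\ge 0$ is not strong enough to make the final estimate genus-independent. On the other side, \eqref{eqn_gammaM<gammaA+del} gives $\gamma(M)\le\gamma(A)-\delta+\floor{\delta/r}$. Combining the two yields $\delta-\floor{\delta/r}\le\gamma(A)-\gamma(C)$, and writing $\delta=qr+s$ with $q=\floor{\delta/r}$ and $0\le s\le r-1$, the left-hand side equals $q(r-1)+s\ge q(r-1)$, so that
\[
\floor{\frac{\delta}{r}}\le\frac{\gamma(A)-\gamma(C)}{r-1}.
\]

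It then suffices to show $\frac{\gamma(A)-\gamma(C)}{r-1}\le h^1(C,A)+r$. Clearing the denominator and substituting $\gamma(A)=g-h^1(C,A)-r$, this is equivalent to $g\le\gamma(C)+r\,h^1(C,A)+r^2$. At this last step I would use the Brill--Noether constraint in its sharp form: $\rho(g,r,d)=g-(r+1)\,h^1(C,A)<0$ gives $h^1(C,A)>\frac{g}{r+1}$, hence $r\,h^1(C,A)>\frac{rg}{r+1}$, and it is enough to verify $g\le\gamma(C)+\frac{rg}{r+1}+r^2$, i.e.\ $\frac{g}{r+1}\le\gamma(C)+r^2$. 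Since $r\ge 2$ we have $\frac{g}{r+1}\le\frac{g}{3}$, while $\gamma(C)+r^2\ge\frac{g-2}{2}+4>\frac{g}{3}$ for every $g$, which closes the argument. The conceptual point is that the effective upper bound on $\gamma(A)$ here comes from $\rho<0$ rather than from the cruder $d\le g-1$, and this is exactly what allows the conclusion to hold with no restriction on the genus.
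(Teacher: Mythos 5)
Your proof is correct and follows essentially the same route as the paper's: the crux in both is that adaptedness of the Donagi--Morrison lift forces $\gamma(M)\ge\gamma(C)=\floor{\frac{g-1}{2}}$, which combined with \eqref{eqn_dprime<d+del} bounds $\floor{\delta/r}$ by roughly $\frac{\gamma(A)-\gamma(C)}{r-1}$, after which the Brill--Noether constraint $\rho(g,r,d)<0$ closes the estimate. The only difference is cosmetic but welcome: by recasting the target as $\floor{\delta/r}\le h^1(C,A)+r$ and using $h^1(C,A)>\frac{g}{r+1}$, you reduce everything to the uniform inequality $\frac{g}{r+1}\le\gamma(C)+r^2$, which avoids the paper's separate treatment of the cases $r\ge 3$ and $r=2$.
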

\begin{proof}
	We may assume that $\gamma(M)\ge \floor{\frac{g-1}{2}}> \frac{g-1}{2}-1$, otherwise $\gamma(M\vert_C)<\floor{\frac{g-1}{2}}$ contradicting the assumption that $\gamma(C)=\floor{\frac{g-1}{2}}$. Thus we have 
	\begin{align*}
		&2k \le d^\prime -2r +\frac{3}{2} -\frac{g}{2}\\
		\Cref{eqn_dprime<d+del} &\implies 2k\le d+\delta +\floor{\frac{\delta}{r}}-2r+\frac{3}{2}-\frac{g}{2}\\
		&\implies \delta-\floor{\frac{\delta}{r}} \le \gamma(A) +\frac{3}{2} -\frac{g}{2}.
	\end{align*}
	
	Since $-\frac{\delta}{r}\le -\floor{\frac{\delta}{r}}$, we have \[\delta\left(\frac{r-1}{r}\right)\le \gamma(A)+\frac{3}{2} - \frac{g}{2}.\] As $r\ge 2$, we have $\frac{r-1}{r}\ge 0$, hence 
	\begin{align}
		\delta &\le \left(\frac{r}{r-1}\right)\left(\gamma(A)+\frac{3}{2}-\frac{g}{2}\right) \label{eqn_del<bla}\\
		\implies \floor{\frac{\delta}{r}}&\le \left(\frac{1}{r-1}\right)\left(\gamma(A)+\frac{3}{2}-\frac{g}{2}\right) \label{eqn_del/r<bla}.
	\end{align}
	
	The inequality $\rho(g,r,d)<0$ is equivalent to $\gamma(A)<\frac{rg}{r+1}-r$. Thus substituting \Cref{eqn_del/r<bla}
	\begin{align*}
		\gamma(A)-g-1+\floor{\frac{\delta}{r}} &< \frac{rg}{r+1}-r-g-1+\left(\frac{1}{r-1}\right)\left(\frac{rg}{r+1}-r+\frac{3}{2}-\frac{g}{2}\right) \\
		&= \frac{-rg+3g-2r^3-4r^2+3r+5}{2(r+1)(r-1)},
	\end{align*} 
	which is negative for $r\ge 3$. 
	
	It remains to show the desired inequality when $r=2$. In this case, \Cref{eqn_del<bla} implies that $\delta\le \frac{g}{3}-1$. Thus 
	\begin{align*}
		\gamma(A)-g-1+\floor{\frac{\delta}{r}}&\le \frac{2g}{3}-2-g-1+\frac{g}{3r}-\frac{1}{r}\\
		&= \frac{g(1-r)}{3r}-3-\frac{1}{r},
	\end{align*}
	which is negative as $r\ge 1$, as desired.
\end{proof}

\begin{cor}
	$\rho(g,r^\prime, d^\prime)\le\rho(g,r,d)$
\end{cor}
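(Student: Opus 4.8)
The plan is to read off this corollary directly from the inequality labeled in the displayed equation just before the preceding proposition together with the sign result of that proposition; there is essentially no new computation to perform. Recall that, writing $r^\prime = r+\delta$ with $\delta\ge 0$ (the content of the first lemma of this subsection) and substituting the bound $d^\prime \le d+\delta+\floor{\frac{\delta}{r}}$ into the exact expansion of $\rho(g,r^\prime,d^\prime)$, we already derived
\[
\rho(g,r^\prime, d^\prime)\le \rho(g,r,d)+\delta\left( \gamma(A)-g-1+\floor{\frac{\delta}{r}} \right).
\]
The work is therefore entirely concentrated in controlling the sign of the correction term on the right-hand side.

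The remaining step is to invoke the preceding proposition, which asserts precisely that $\gamma(A)-g-1+\floor{\frac{\delta}{r}}<0$. Since $\delta\ge 0$, the product $\delta\bigl( \gamma(A)-g-1+\floor{\frac{\delta}{r}} \bigr)$ is nonpositive: it vanishes when $\delta=0$ and is strictly negative when $\delta>0$. Combining this with the displayed inequality above yields $\rho(g,r^\prime, d^\prime)\le \rho(g,r,d)$, which is exactly the claim.

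I do not expect any genuine obstacle here, as all of the substantive analysis, namely the case distinction between $r\ge 3$ and $r=2$ and the manipulation of the Brill--Noether inequality $\gamma(A)<\frac{rg}{r+1}-r$, was already dispatched in establishing the negativity of the bracketed quantity in the preceding proposition. The only point worth flagging is bookkeeping: one must remember that $\delta\ge 0$ so that multiplying the negative bracketed term by $\delta$ preserves (weak) negativity, and that the $\delta=0$ boundary case, corresponding to $r^\prime=r$, is harmless since the correction term simply vanishes there.
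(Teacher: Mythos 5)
Your proof is correct and is exactly the argument the paper intends: the corollary is stated without proof precisely because it follows immediately from inequality \eqref{eqn_rho(g,rprime, dprime)<rho+del} together with the preceding proposition's bound $\gamma(A)-g-1+\floor{\frac{\delta}{r}}<0$ and the fact that $\delta\ge 0$. No issues.
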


In total, we have proved that the Donagi--Morrison lift of $A$ obtained as $\det(E)$ induces a marking $\Lambda^{r^\prime}_{g,d^\prime}$ with $\rho(g,r^\prime,d^\prime)<0$. 

\begin{theorem} \label{Theorem strong DM implies Mukai}
	\Cref{conj Strong DM} gives a positive answer to \Cref{Mukai Conj}.
\end{theorem}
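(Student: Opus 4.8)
The plan is to run the machinery assembled in this section in sequence, converting a Brill--Noether special curve into a Brill--Noether special marking of $\Pic(S)$. First I would take a smooth $C\in|H|$ carrying a Brill--Noether special line bundle and, arguing by the Clifford index as above, reduce to a complete basepoint free $\g{r}{d}$ called $A$ with $d\le g-1$ and $\rho(g,r,d)<0$. The case $\gamma(A)\le\floor{\frac{g-1}{2}}$ is already settled by the lifting theorem \Cref{LC Thm 4.2} of Lelli-Chiesa together with Knutsen's result, which produce a Brill--Noether special marking on $\Pic(S)$; as noted above this also disposes of every $\g{1}{d}$. There remains the genuinely new case $r\ge2$ and $\gamma(A)>\floor{\frac{g-1}{2}}$ of a non-computing $A$.

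Here I would invoke \Cref{conj Strong DM} to obtain a nontrivial $N\hookrightarrow E_{C,A}$ with $h^0(S,N)\ge2$ and $E\colonequals E_{C,A}/N$ stable, and set $M\colonequals\det(E)=H-N$ via \Cref{coker is gLM}. Since $E$ is stable, \Cref{Prop Proof Strategy} guarantees that $M$ is adapted to $|H|$, that $|A|$ is contained in the restriction of $|M|$ to $C$, and that $\gamma(M|_C)\le\gamma(A)$; hence $M$ is a bona fide Donagi--Morrison lift of $A$, of some type $\g{r^\prime}{d^\prime}$, and $\langle H,M\rangle\cong\Lambda^{r^\prime}_{g,d^\prime}$ is the rank-$2$ marking I want to show is Brill--Noether special.

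The step I expect to be the main obstacle is verifying the Brill--Noether constraint for this marking, and the difficulty is precisely that lifting need not preserve the type: the preceding lemmas show $r^\prime\ge r$, so the speciality of $A$ does not transfer to $M$ for free, and one must control how $\rho$ moves as the rank grows by $\delta\colonequals r^\prime-r\ge0$. This is exactly what the preceding proposition and corollary accomplish. Combining $\rho(g,r,d)<0$ (equivalently $\gamma(A)<\frac{rg}{r+1}-r$) with the bound on $\delta$ forced by the non-computing hypothesis $\gamma(M)\ge\floor{\frac{g-1}{2}}$ yields $\gamma(A)-g-1+\floor{\frac{\delta}{r}}<0$, whence $\rho(g,r^\prime,d^\prime)\le\rho(g,r,d)<0$. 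Thus the marking satisfies the Brill--Noether constraint.

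It then remains to dispatch the two bookkeeping hypotheses of \Cref{prop:GLT}. The Hodge constraint $\Delta(g,r^\prime,d^\prime)<0$ is automatic: $\langle H,M\rangle$ is a rank-$2$ sublattice of $\Pic(S)$ containing $H$ with $H^2=2g-2>0$, so the Hodge index theorem forces signature $(1,1)$ and hence negative discriminant. Since $\rho$ and the discriminant are invariant under $M\leftrightarrow H-M$, I may replace $M$ by $N=H-M$ if necessary to arrange $0\le d^\prime\le g-1$. With all three conditions of \Cref{prop:GLT} in hand, $(S,H)$ lies on the Noether--Lefschetz divisor $\mathcal{K}^{r^\prime}_{g,d^\prime}$ and is therefore Brill--Noether special, giving the positive answer to \Cref{Mukai Conj}.
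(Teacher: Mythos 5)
Your argument agrees with the paper's up to and including the decisive step: the reduction to a non-computing $A$ with $r\ge 2$, the production of $M=\det(E_{C,A}/N)$ via \Cref{conj Strong DM} and \Cref{coker is gLM}, and the chain of estimates culminating in $\rho(g,r',d')\le\rho(g,r,d)<0$ are exactly the content of the preceding lemmas and corollary, which you correctly identify as the heart of the matter. Where you diverge is the final conversion of this data into Brill--Noether speciality of $(S,H)$: you route through \Cref{prop:GLT}, checking the Hodge constraint by the Hodge index theorem and normalizing $d'\le g-1$, whereas the paper verifies Mukai's definition directly --- Riemann--Roch on $S$ plus the vanishing of $h^2$ for the effective bundles $M$ and $N=H-M$ gives $h^0(S,M)\ge r'+1$ and $h^0(S,H-M)\ge g-d'+r'$, and $\rho(g,r',d')<0$ is precisely the statement that the product of these exceeds $g$. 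The paper's computation is not just shorter; it sidesteps two technicalities your route leaves open: \Cref{prop:GLT} concerns the NL divisors $\mathcal{K}^{r'}_{g,d'}$, which are defined by a \emph{primitive} embedding of $\Lambda^{r'}_{g,d'}$ preserving $H$, and the sublattice $\langle H,M\rangle$ need not be saturated in $\Pic(S)$ (nor is it a priori of rank $2$, which your Hodge-index step assumes). Neither issue is fatal --- one could pass to the saturation or, better, fall back on the $h^0$ product inequality, which requires no primitivity --- but as written your last paragraph asserts membership in $\mathcal{K}^{r'}_{g,d'}$ without justifying primitivity. I recommend replacing that paragraph with the direct Riemann--Roch verification.
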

\begin{proof}
	Since $h^0(S,N)\ge 2$, we have $h^2(S,N)=0$. Likewise, as $d^\prime>0$, we have $h^2(S,M)=0$. For a line bundle $L\in\Pic(S)$, $\chi(S,L)=2+\frac{L^2}{2}$. Thus $h^0(S,N)\ge g-d^\prime +r^\prime$ and $h^0(S,M)\ge r^\prime +1$. Finally $\rho(g,r^\prime,d^\prime)<0$ shows that $\langle H, M \rangle$ is a Brill--Noether special marking on $\Pic(S)$. 
\end{proof}

\subsection{Donagi--Morrison lifts from generalized Lazarsfeld--Mukai bundle quotients}\label{subsection Dm lifts from gLM quotients}

There are additional constraints when $E$ is a (g)LM bundle. Note that $E$ is a gLM bundle of type (II) if $A$ is primitive. Throughout, we let $M\colonequals\det (E)$ be of type $\g{r^\prime}{d^\prime}$, and we write $\gamma(M)=d^\prime - 2r^\prime$.

\begin{prop}\label{prop lm bundle quotient stability}
	Suppose $E$ is a LM bundle with $\gamma(E)=k$. If $r^\prime > r+\frac{rk}{r-1}$, then $E$ is not stable.
\end{prop}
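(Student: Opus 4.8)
The plan is to prove the contrapositive: assuming $E$ is stable, I would extract a numerical inequality from the moduli-space bound $\langle v(E),v(E)\rangle\ge-2$ and show it forces $r^\prime\le r+\frac{rk}{r-1}$. The key is to identify the three Chern invariants of $E$ in terms of $r$, $r^\prime$, and $k$, and then feed them into \Cref{prop dm lift restrictions if quotient is stable}(ii).

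First I would record the invariants. Since $E=E_{C,A}/N$ is the quotient of the rank $r+1$ bundle $E_{C,A}$ (see \Cref{LM bundle props}) by the line bundle $N$, we have $\operatorname{rk}(E)=r$. Its determinant is $c_1(E)=M$, which is of type $\g{r^\prime}{d^\prime}$, so $c_1(E)^2=M^2=2r^\prime-2$. Finally, the defining relation $\gamma(E)=c_2(E)-2(\operatorname{rk}(E)-1)$ together with $\gamma(E)=k$ gives $c_2(E)=k+2(r-1)$.

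Next I would substitute these into the stability bound of \Cref{prop dm lift restrictions if quotient is stable}(ii), namely
\[
c_2(E)\ge\frac{(\operatorname{rk}(E)-1)c_1(E)^2}{2\operatorname{rk}(E)}+\operatorname{rk}(E)-\frac{1}{\operatorname{rk}(E)},
\]
obtaining $k+2r-2\ge\frac{(r-1)(r^\prime-1)}{r}+r-\frac{1}{r}$. Clearing denominators by multiplying through by $r$ and simplifying, the inequality becomes $(r-1)(r^\prime-1)\le rk+(r-1)^2$. Since $r\ge2$, dividing by $r-1>0$ yields $r^\prime\le r+\frac{rk}{r-1}$. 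Thus stability of $E$ is incompatible with $r^\prime>r+\frac{rk}{r-1}$, which is precisely the claimed contrapositive.

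I do not expect a serious obstacle here: the argument is a direct computation once the invariants are identified, and the only points requiring care are the correct reading of $c_2(E)$ from the definition of $\gamma(E)$ and of $c_1(E)^2=M^2=2r^\prime-2$ from the type of $M$. The hypothesis $r\ge2$, in force throughout the section, is exactly what is needed to divide by $r-1$.
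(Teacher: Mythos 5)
Your proof is correct, and it reaches the conclusion by a route that is equivalent to, but packaged differently from, the paper's. The paper uses the LM-bundle hypothesis in an essential way: it writes $E=E_{D,B}$ for a smooth curve $D\in|M|$ of genus $r^\prime$ and a $\g{r-1}{k+2r-2}$ on $D$, computes $\rho(r^\prime,r-1,k+2r-2)$, checks that this is negative exactly when $r^\prime>r+\frac{rk}{r-1}$, and concludes non-stability because a LM bundle with negative Brill--Noether number fails to be simple (via $\chi(F_{D,B}\otimes E_{D,B})=2(1-\rho)$ from \Cref{LM bundle props} and Serre duality). You instead feed the invariants $\operatorname{rk}(E)=r$, $c_1(E)^2=2r^\prime-2$, $c_2(E)=k+2r-2$ directly into the Mukai-vector bound of \Cref{prop dm lift restrictions if quotient is stable}(ii), and your algebra ($(r-1)(r^\prime-1)\le rk+(r-1)^2$, then divide by $r-1>0$) checks out. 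The two inequalities are secretly identical, since $\langle v(E),v(E)\rangle=-\chi(E,E)=2\rho(r^\prime,r-1,k+2r-2)-2$, so $\langle v,v\rangle\ge-2$ is precisely $\rho\ge 0$; but your version never needs to identify $E$ as $E_{D,B}$, only its Chern data, so it applies verbatim to any stable sheaf with those invariants and stays entirely within the numerical framework already set up in the section. What the paper's formulation buys is the explicit curve $D$ and linear series $B$, which are reused immediately afterwards in \Cref{cor computing spec gamma lifts} and in \Cref{rk quotient not LM bundle} (where the same computation is rerun for $E^{\vee\vee}$); what yours buys is economy and a slightly more general statement.
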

\begin{proof}
	$E=E_{D,B}$ is a LM bundle for a smooth irreducible curve $D\in|M|$ of genus $r^\prime$ and a line bundle $B\in\Pic(D)$. Since $\gamma(E)=k$ and $\operatorname{rk}(E)=r$, $B$ is a $\g{r-1}{k+2r-2}$ on $D$. We compute \[\rho(g(D), r(B), d(B))=\rho(r^\prime, r-1, k+2r-2),\] and see that $\rho(g(D), r(B), d(B))<0$ if and only if $r^\prime > r+\frac{rk}{r-1}$, hence $E$ is not stable.
\end{proof}
\begin{cor}\label{cor computing spec gamma lifts}
	If the LM bundle $E$ above is stable with $\gamma(E)=0$ and $\gamma(M\vert_C)=\gamma(A)$, then $M\vert_C\cong A$.
\end{cor}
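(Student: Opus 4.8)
The plan is to show that the effective divisor comparing $A$ with $M|_C$ is trivial. Since $M = H - N = \det(E)$ is a Donagi--Morrison lift of $A$ (\Cref{Prop Proof Strategy}), the linear system $|A|$ is contained in the restriction of $|M|$ to $C$ in the sense of \Cref{definition linear system contained in restriction of linear system}. First I would unwind this to produce an effective divisor $Z$ on $C$ with $M|_C \cong A \otimes \OO_C(Z)$; since $\deg(M|_C) = M\cdot C = d'$, one has $\deg Z = d' - d \ge 0$, and the desired conclusion $M|_C \cong A$ is precisely the claim $Z = 0$. As $A$ is complete, the canonical section of $\OO_C(Z)$ induces an injection $H^0(C,A) \hookrightarrow H^0(C,M|_C)$, so $r = r(A) \le r(M|_C)$.

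Next I would identify $r(M|_C)$ with $r'$ using the hypothesis $\gamma(E) = 0$. By \Cref{coker is gLM}, $\gamma(E) = \gamma(A) + M^2 - H\cdot M + 2 = \gamma(A) - (d' - 2r') = \gamma(A) - \gamma(M)$, so $\gamma(E) = 0$ is equivalent to $\gamma(M) = \gamma(A)$. The curve-theoretic Clifford index satisfies $\gamma(M|_C) = d' - 2 r(M|_C)$, and the hypothesis $\gamma(M|_C) = \gamma(A) = \gamma(M) = d' - 2r'$ then forces $r(M|_C) = r'$. Combined with the previous step this gives $r' \ge r$.

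Finally I would bring in stability. Because $E$ is a stable LM bundle with $\gamma(E) = k = 0$, \Cref{prop lm bundle quotient stability} yields $r' \le r + \frac{rk}{r-1} = r$. Hence $r' = r$, so $r(M|_C) = r' = r$, and then $\gamma(M|_C) = \gamma(A)$ reads $d' - 2r = d - 2r$, i.e. $d' = d$. Therefore $\deg Z = d' - d = 0$; an effective divisor of degree zero vanishes, so $Z = 0$ and $M|_C \cong A$.

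The only delicate point, and the step I expect to require the most care, is matching the surface-level invariants of the lift ($\gamma(M) = d' - 2r'$ with $r' = \frac{1}{2} M^2 + 1$) against the curve-level invariants of its restriction ($\gamma(M|_C)$ and $r(M|_C) = h^0(C,M|_C) - 1$): the whole argument hinges on the two hypotheses $\gamma(E) = 0$ and $\gamma(M|_C) = \gamma(A)$ conspiring to give $r(M|_C) = r'$, after which the stability bound $r' \le r$ and the effectivity bound $r' \ge r$ pinch the ranks together and collapse $Z$.
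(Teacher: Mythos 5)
Your proof is correct and follows essentially the same route as the paper's: stability together with \Cref{prop lm bundle quotient stability} (with $k=0$) gives $r^\prime\le r$, the hypothesis $\gamma(E)=0$ combined with the Donagi--Morrison containment gives $r^\prime\ge r$ and hence $d^\prime=d$, and the degree count then forces $M\vert_C\cong A$. The only difference is cosmetic: you derive $r^\prime\ge r$ from the explicit section injection $H^0(C,A)\hookrightarrow H^0(C,M\vert_C)$ and make the vanishing of the correction divisor $Z$ explicit, steps the paper leaves implicit.
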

\begin{proof}
	$M$ is a Donagi--Morrison lift of $A$. Since $E$ is stable, \Cref{prop lm bundle quotient stability} implies we have $r^\prime=r$, hence as $H.M-M^2+2=\gamma(A)-\gamma(E)$, we have $\operatorname{deg}(M\vert_C)=\operatorname{deg}(A)$. Therefore $\gamma(M\vert_C)=\gamma(A)$ implies that $h^0(C,M\vert_C)=r+1$. Thus $M\vert_C \cong A$, as desired.
\end{proof}
\begin{remark}\label{rk quotient not LM bundle}
	If $E$ above is not a LM bundle and only a gLM bundle, then $E^{\vee\vee}$ is a LM bundle of Clifford index $\gamma(E^{\vee\vee})= \gamma(E)-\ell(\kappa)$, where $\kappa= E^{\vee\vee}/E$ is the $0$-dimensional sheaf where $E$ is not locally free. Moreover, $E^{\vee\vee}$ is a LM bundle for a $\g{r-1}{k+2r-2-\ell(\kappa)}$ on a smooth irreducible curve $D\in |M|$. Repeating the same calculation shows that if $r^\prime>r+\frac{r(k-\ell(\kappa))}{r-1}$, then $E^{\vee\vee}$ is not stable.
\end{remark}

\begin{lemma}\label{lemma gamma(E)=gamma(A)-gamma(C)}
	If $E$ is a gLM bundle, then one has $k=\gamma(E)\le \gamma(A)-\gamma(C)$.
\end{lemma}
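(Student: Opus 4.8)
The plan is to reduce the claimed bound to a two-sided estimate on the Clifford index of the restriction $M|_C$, where $M=c_1(E)=H-N$ is the determinant of the quotient. Indeed, \Cref{coker is gLM} already records
\[
\gamma(E)=\gamma(E_{C,A})+M^2-H.M+2=\gamma(A)+M^2-H.M+2,
\]
so since $k=\gamma(E)$ it suffices to show $H.M-M^2-2\ge\gamma(C)$, and I would obtain this by sandwiching $\gamma(M|_C)$ between $\gamma(C)$ and $H.M-M^2-2$.

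First I would establish the upper bound $\gamma(M|_C)\le H.M-M^2-2$. As $\deg(M|_C)=H.M$, the identity $\gamma(M|_C)=H.M-2h^0(C,M|_C)+2$ turns this into $h^0(C,M|_C)\ge\tfrac{M^2}{2}+2=\chi(S,M)$, which I would read off from the restriction sequence $0\to\OO_S(-N)\to\OO_S(M)\to M|_C\to 0$: since $N$ is nontrivial and effective, $h^0(S,-N)=0$, so $h^0(C,M|_C)\ge h^0(S,M)\ge\chi(S,M)$, the last step using $h^2(S,M)=h^0(S,-M)=0$ (valid because $H.M>0$ and $H$ is ample). For the lower bound $\gamma(M|_C)\ge\gamma(C)$ I only need $M|_C$ to contribute to the Clifford index: $h^0(C,M|_C)\ge h^0(S,M)\ge2$, while Serre duality together with $\omega_C=H|_C$ gives $h^1(C,M|_C)=h^0(C,N|_C)\ge h^0(S,N)\ge2$. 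This is precisely adaptedness of $M$, so one could invoke it directly. Chaining, $\gamma(C)\le\gamma(M|_C)\le H.M-M^2-2$, and substituting back gives $k=\gamma(A)-(H.M-M^2-2)\le\gamma(A)-\gamma(C)$.

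The step I expect to be the real obstacle is not the cohomology but the positivity input behind it, namely that $h^0(S,M)\ge 2$, equivalently $M^2\ge 0$ (i.e.\ $r'\ge 1$). If $M$ were a $(-2)$-curve class the lower bound would collapse and the stated inequality could genuinely fail. I would close this gap by recalling that, $E$ being a gLM bundle of type (II), \Cref{coker is gLM} and \Cref{rk quotient not LM bundle} represent $M$ by a smooth irreducible curve $D\in|M|$ of genus $r'$; combined with condition (i) of \Cref{definition adapted} for the Donagi--Morrison lift, this forces $r'\ge1$, while $H.M=d'\ge d>0$ holds because $M$ is a potential Donagi--Morrison lift. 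Once these positivity facts are in place, the cohomological estimates above are routine.
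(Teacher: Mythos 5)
Your argument is correct and follows essentially the same route as the paper's proof, which likewise deduces the bound from $\gamma(M|_C)\ge\gamma(C)$ (because $M|_C$ contributes to the Clifford index of $C$) together with the identity $\gamma(M|_C)=\gamma(A)-\gamma(E)-2h^1(S,N)$, of which your estimate $\gamma(M|_C)\le H.M-M^2-2=\gamma(A)-\gamma(E)$ is the weak form. The one caveat is your closing positivity step: adaptedness of $M$ is a \emph{conclusion} of \Cref{Prop Proof Strategy} requiring stability hypotheses not assumed here, so $h^0(S,M)\ge 2$ (equivalently $M^2\ge 0$) is better extracted from the fact that $E$ is a gLM bundle of type (II), whence $\det(E)$ is globally generated and nontrivial and hence cannot be a $(-2)$-class; the paper itself simply asserts $h^0(C,M|_C),h^1(C,M|_C)\ge 2$ by reference to the proof of \cite[Proposition~3.17]{haburcak_2022}.
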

\begin{proof}
	As in the proof of \cite[Proposition 3.17]{haburcak_2022}, we note that $h^0(C,\det(E)), h^1(C,\det(E))\ge 2$. Hence $\det(E)\vert_C$ contributes to the Clifford index of $C$, and thus $\gamma(\det(E)\vert_C)\ge \gamma(C)$. Since $\gamma(\det(E)\vert_C)=\gamma(A)-\gamma(E)-2h^1(S,N)$, the result follows.
\end{proof}

\begin{prop}\label{prop gamma(E) bounded}
	Suppose $E$ is a gLM bundle. If $\rho(g,r^\prime, d^\prime)\ge 0$, then \[\gamma(E)\le\gamma(A) +r^\prime -\frac{r^\prime g}{r^\prime+1}.\]
\end{prop}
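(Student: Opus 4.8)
The plan is to reduce the asserted bound to a purely numerical inequality for $\gamma(M)$, exploiting that the Clifford index of the quotient $E$ is completely determined by $\gamma(A)$ and $\gamma(M)$. First I would record the invariants of $E$ read off from the defining sequence $0\to N\to E_{C,A}\to E\to 0$: since $N$ is a line bundle, $\operatorname{rk}(E)=\operatorname{rk}(E_{C,A})-1=r$ and $c_1(E)=H-N=M$, where by hypothesis $M$ is of type $\g{r^\prime}{d^\prime}$, so that $M^2=2r^\prime-2$ and $H.M=d^\prime$. The Chern class relation $c_2(E)=d+M^2-H.M$ (the second formula of \Cref{coker is gLM}) then gives, via the definition $\gamma(E)=c_2(E)-2(\operatorname{rk}(E)-1)$, the identity
\[
\gamma(E)=\gamma(A)+M^2-H.M+2=\gamma(A)-(d^\prime-2r^\prime)=\gamma(A)-\gamma(M).
\]

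Next I would unpack the Brill--Noether hypothesis. Rearranging $\rho(g,r^\prime,d^\prime)=g-(r^\prime+1)(g-d^\prime+r^\prime)\ge 0$ yields $d^\prime\ge g+r^\prime-\tfrac{g}{r^\prime+1}$, that is,
\[
\gamma(M)=d^\prime-2r^\prime\ge \frac{r^\prime g}{r^\prime+1}-r^\prime.
\]
Substituting this lower bound for $\gamma(M)$ into the identity $\gamma(E)=\gamma(A)-\gamma(M)$ gives
\[
\gamma(E)\le \gamma(A)-\left(\frac{r^\prime g}{r^\prime+1}-r^\prime\right)=\gamma(A)+r^\prime-\frac{r^\prime g}{r^\prime+1},
\]
which is precisely the claim.

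This is in essence a bookkeeping argument, so I expect no serious obstacle; the content lives entirely in the two displayed computations. The points requiring attention are the computation $\operatorname{rk}(E)=r$ (and not $r+1$) together with the correct substitutions $M^2=2r^\prime-2$ and $H.M=d^\prime$, since a rank or sign slip there would corrupt the final inequality. I would also flag that the hypothesis that $E$ be a gLM bundle is what licenses the use of $\gamma(E)=c_2(E)-2(\operatorname{rk}(E)-1)$, and that although in the Strong Donagi--Morrison setting $N$ is assumed only to satisfy $h^0(S,N)\ge 2$ rather than to be globally generated, this causes no difficulty: the identity $\gamma(E)=\gamma(A)-\gamma(M)$ is forced by additivity of Chern classes in the defining sequence alone and does not rely on the full hypotheses of \Cref{coker is gLM}.
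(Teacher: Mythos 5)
Your proposal is correct and follows essentially the same route as the paper: the key identity $\gamma(E)=\gamma(A)-\gamma(M)$ (which the paper encodes by saying $\det(E)$ is of type $\g{r^\prime}{\gamma(A)-\gamma(E)+2r^\prime}$) combined with rearranging $\rho(g,r^\prime,d^\prime)\ge 0$. Your extra care in deriving that identity from the Chern class relations of \Cref{coker is gLM}, and in noting it needs only additivity of Chern classes in the defining sequence, is sound but does not change the argument.
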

\begin{proof}
	Let $k=\gamma(E)$, then $\det(E)$ is of type $\g{r^\prime}{\gamma(A)-k+2r^\prime}$. The bound is obtained from \[\rho(g,r^\prime,\gamma(A)-k+2r^\prime)\ge 0.\]
	
	We compute 
	\begin{align*}
		\rho(g,r^\prime,\gamma(A)-k+2r^\prime)&\ge 0\\
		\Longleftrightarrow k &\le \frac{g}{r^\prime +1} +r^\prime -g+\gamma(A)\\
		&= \gamma(A) +r^\prime -\frac{r^\prime g}{r^\prime+1}.
\qedhere
	\end{align*}
\end{proof}

\begin{cor}\label{cor gamma(E) bounded when BN special}
	Let $E$ be a gLM bundle. If $\gamma(E)>\gamma(A) +r^\prime -\frac{r^\prime g}{r^\prime+1}$, then $\langle H , \det(E) \rangle$ is a Brill--Noether special marking on $(S,H)$.
\end{cor}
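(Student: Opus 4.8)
The plan is to read this corollary as the contrapositive of \Cref{prop gamma(E) bounded}, followed by the short Riemann--Roch argument already carried out in \Cref{Theorem strong DM implies Mukai}. The hypothesis $\gamma(E) > \gamma(A) + r^\prime - \frac{r^\prime g}{r^\prime+1}$ is exactly the negation of the conclusion of \Cref{prop gamma(E) bounded}, which asserts that $\rho(g,r^\prime,d^\prime) \ge 0$ forces $\gamma(E) \le \gamma(A) + r^\prime - \frac{r^\prime g}{r^\prime+1}$. Contraposition therefore gives $\rho(g,r^\prime,d^\prime) < 0$ at once, so the marking $\Lambda^{r^\prime}_{g,d^\prime}$ already satisfies the Brill--Noether constraint, and only the production of sections remains.

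To produce them I would set $N = H - M$, the sub-line bundle in the defining sequence $0 \to N \to E_{C,A} \to E \to 0$. By hypothesis $h^0(S,N) \ge 2$, so $N$ is effective and nontrivial and $h^2(S,N) = h^0(S,-N) = 0$. Since $E$ is a gLM bundle its determinant $M = \det(E)$ is effective and nontrivial, so $d^\prime = M.H > 0$ and $h^2(S,M) = h^0(S,-M) = 0$ as well. Combining $\chi(S,L) = 2 + \tfrac{1}{2}L^2$ for a line bundle on a K3 with $M^2 = 2r^\prime - 2$ and $N^2 = (H-M)^2 = (2g-2) - 2d^\prime + (2r^\prime - 2)$, Riemann--Roch then yields $h^0(S,M) \ge r^\prime + 1$ and $h^0(S,N) \ge g - d^\prime + r^\prime$.

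Multiplying these two bounds and using that $\rho$ is an integer finishes the proof:
\[
h^0(S,M)\, h^0(S,H-M) \ge (r^\prime+1)(g - d^\prime + r^\prime) = g - \rho(g,r^\prime,d^\prime) \ge g+1 .
\]
As both $N$ and $M$ are nontrivial we have $M \ne H$ and $M \ne 0$, so this inequality exhibits $\langle H, M \rangle = \langle H, \det(E) \rangle$ as a Brill--Noether special marking on $(S,H)$.

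Since essentially all of the numerical content is already packaged in \Cref{prop gamma(E) bounded}, I do not expect a genuine obstacle here. The one place warranting care is the pair of vanishings $h^2(S,M) = h^2(S,N) = 0$: these rest on $M$ and $N$ being effective and nontrivial together with the nefness of $H$, so that neither $-M$ nor $-N$ is effective.
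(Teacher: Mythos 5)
Your proposal is correct and follows exactly the route the paper intends: the corollary is the contrapositive of \Cref{prop gamma(E) bounded}, giving $\rho(g,r^\prime,d^\prime)<0$, combined with the Riemann--Roch/section-counting argument already written out in the proof of \Cref{Theorem strong DM implies Mukai}. The paper leaves this implicit (no proof is printed for the corollary), and your version supplies the same steps with the appropriate care about the vanishing of $h^2$.
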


\section{Brill--Noether special K3 surfaces}\label{Section BN special k3s}

In any genus, the lifting results of Lelli-Chiesa \cite[Theorem
4.2]{Lelli_Chiesa_2015} and Knutsen \cite[Lemma 8.3]{Knutsen2001}
suffice to verify \Cref{Mukai Conj} when $\gamma(C)\le \lfloor
\frac{g-1}{2}\rfloor$ and $\gamma(A)=\gamma(C)$. In those cases,
taking $L$ to be the lift of $A$ provides a Brill--Noether special
marking on $(S,H)$. However, in genus $\ge 14$, there are
non-computing line bundles, which have $\gamma(A)>\gamma(C)$.
We verify \Cref{Mukai Conj} in genus $14-19$ using Donagi--Morrison lifts. For each lattice $\Lambda^r_{g,d}$ associated with the Brill--Noether special NL divisors $\mathcal{K}^r_{g,d}$, we aim to identify $\Lambda^r_{g,d}$ as coming from a (Donagi--Morrison) lift of a Brill--Noether special linear system on a curve $C\in|H|$.

\subsection{Strong DM Holds for genus \texorpdfstring{$14-19$}{}}\label{subsection strong dm holds}
In genus $\ge 14$, there are non-computing line bundles, and thus the lifting results of Lelli-Chiesa and Knutsen \cite{Lelli_Chiesa_2015,Knutsen2001} where the line bundle is assumed to compute $\gamma(C)$ do not suffice. 

We list the non-computing line bundles in genus $14-19$.
\begin{itemize}
	\item $g=14$: $\g{2}{11}$, $\g{3}{13}$;
	\item $g=15$: $\g{3}{14}$;
	\item $g=16$: $\g{2}{12}$, $\g{3}{14}$;
	\item $g=17$: $\g{2}{13}$, $\g{3}{15}$;
	\item $g=18$: $\g{2}{13}$, $\g{3}{15}$, $\g{3}{16}$, $\g{4}{17}$;
	\item $g=19$: $\g{2}{14}$, $\g{3}{16}$, $\g{3}{17}$, $\g{4}{18}$.
\end{itemize}

To verify \Cref{Mukai Conj} in genus $14-19$, we first verify that \Cref{conj Strong DM} holds for the non-computing line bundles, where we use the lifting results of Lelli-Chiesa when $r=2$ \cite{Lelli_Chiesa_2013}, and our previous work when $r=3$ \cite{haburcak_2022}. We then use the Donagi--Morrison lifts to show that $\Pic(S)$ has a Brill--Noether special marking.

\begin{lemma}\label{lemma large slope or BN special}
	Let $(S,H)$ be a K3 surface of genus $g$. Suppose there is a line bundle $L\in\Pic(S)$ of type $\g{r}{d}$ with $L$ and $H-L$ globally generated. If $d\le r$, then $(S,H)$ is Brill--Noether special.
\end{lemma}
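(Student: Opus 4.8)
The plan is to bound $h^0(S,L)$ and $h^0(S,H-L)$ from below by Riemann--Roch and show that their product already exceeds $g+1$, so that $L$ itself exhibits $(S,H)$ as Brill--Noether special. The only data needed are the intersection numbers encoded by the type $\g{r}{d}$, namely $L^2=2r-2$ and $H.L=d$, together with the effectivity coming from global generation.

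First I would record two consequences of global generation. Since $L$ is globally generated and nontrivial it is base point free, hence nef, so $L^2=2r-2\ge 0$ and therefore $r\ge 1$; being effective, it also has $h^2(S,L)=h^0(S,-L)=0$. Riemann--Roch on the K3 surface then gives $\chi(S,L)=2+\tfrac{L^2}{2}=r+1$, so $h^0(S,L)\ge r+1\ge 2$. Next I would compute the class $H-L$: from $H^2=2g-2$ one gets $(H-L)^2=(2g-2)-2d+(2r-2)=2(g-d+r-2)$, whence $\chi(S,H-L)=2+\tfrac{(H-L)^2}{2}=g-d+r$. As $H-L$ is likewise globally generated and nontrivial it is effective, so $h^2(S,H-L)=h^0(S,L-H)=0$ and the same argument yields $h^0(S,H-L)\ge g-d+r$.

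Finally I would invoke the hypothesis $d\le r$, which gives $g-d+r\ge g$, and combine the two estimates:
\[
h^0(S,L)\,h^0(S,H-L)\ge (r+1)(g-d+r)\ge (r+1)\,g\ge 2g\ge g+1,
\]
using $r\ge 1$ and $g\ge 2$. Since $L$ is nontrivial and distinct from $H$ (the equality $L=H$ would give $d=2g-2$ and $r=g$, incompatible with $d\le r$ once $g>2$), this is precisely the inequality defining Brill--Noether specialty, so $\langle H,L\rangle$ is a Brill--Noether special marking on $\Pic(S)$.

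I do not expect a genuine obstacle here: the argument reduces to two Riemann--Roch computations and the single inequality $d\le r$. The only points requiring a little care are the boundary bookkeeping---checking that global generation forces $r\ge 1$, ruling out a spurious class with $L^2=-2$, and confirming that both $L$ and $H-L$ are nontrivial and effective so that the $h^2$ terms vanish---after which the estimate is immediate.
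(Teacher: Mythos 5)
Your proof is correct and takes essentially the same route as the paper: the paper simply computes $\rho(g,r,d)=-rg-(r+1)(r-d)<0$ and invokes the standard fact (recorded in the introduction, following Greer--Li--Tian) that for globally generated $L$ and $H-L$ this is equivalent to Brill--Noether specialty, whereas you re-derive that equivalence explicitly via Riemann--Roch, since $(r+1)(g-d+r)\ge g+1$ is exactly $\rho(g,r,d)<0$. The extra bookkeeping you do (nefness giving $r\ge 1$, vanishing of $h^2$, and $L\ne H$, the last of which already follows from $L$ being of type $\g{r}{d}$, i.e.\ spanning a rank-two sublattice with $H$) is sound and just makes the paper's one-line argument self-contained.
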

\begin{proof}
	Let $r=\frac{2+L^2}{2}\ge 1$, and $d\le r$. We compute $$\rho(g,r,d)=g-(r+1)(g-d+r)=-rg-(r+1)(r-d)<0.$$ Thus the sublattice $\langle H, L \rangle \subseteq \Pic(S)$ is a Brill--Noether special marking of $\Pic(S)$.
\end{proof}

\begin{remark}
	Thus, to show that $(S,H)$ is Brill-Noether special if $C\in |H|$ has a Brill--Noether special $\g{3}{d}$, we may assume $\mu\ge 4$ in \Cref{theorem lifting g3ds general}.
\end{remark}

For each of the non-computing $\g{r}{d}$s in genus $14-19$, we show that $(S,H)$ is Brill--Noether special and/or \Cref{conj Strong DM} holds for a basepoint free $\g{3}{d^\prime}$, which allows us to use known lifting results.

\begin{lemma}\label{Lemma strong DM holds}
	Let $(S,H)$ be a polarized K3 surface of genus $14\le g \le 19$ and $C\in|H|$ a smooth irreducible curve. Suppose $A$ is a primitive non-computing line bundle on $C$, then at least one of the following holds:
	\begin{enumerate}[label=\normalfont(\alph*)]
		\item $(S,H)$ is Brill--Noether special;
		\item there is a line bundle $N\hookrightarrow E_{C,A}$ such that $h^0(S,N)\ge 2$ and $E=E_{C,A}/N$ is stable, that is, \Cref{conj Strong DM} holds; or,
		\item $A$ is of type $\g{4}{d}$, and $C$ has a basepoint free complete linear system of type $\g{3}{d^\prime}$ with $d^\prime \le d-1$. Moreover, {\normalfont(a)} holds or {\normalfont(b)} holds for the $\g{3}{d^\prime}$.
	\end{enumerate}
\end{lemma}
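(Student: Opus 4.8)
The plan is to prove the trichotomy by casing on the rank $r$ of $A$, since the table of non-computing types shows $r\in\{2,3,4\}$ throughout genus $14\le g\le 19$, and to feed $A$ into the appropriate bounded lifting theorem whose conclusion is exactly statement (b) that \Cref{conj Strong DM} holds. Two preliminary reductions set up the numerics. First, by the remark preceding this lemma I may assume $\mu\ge 4$, since otherwise \Cref{lemma large slope or BN special} produces a small-slope marking and (a) holds. Second, I would reduce to $\gamma(C)=\lfloor\frac{g-1}2\rfloor$: if $\gamma(C)<\lfloor\frac{g-1}2\rfloor$, a line bundle computing $\gamma(C)$ is automatically Brill--Noether special (for $r\ge 2$ the threshold $g-r-\frac{g}{r+1}$ exceeds $\lfloor\frac{g-1}2\rfloor$ in this range, while for a pencil $\rho<0$ is equivalent to $\gamma(C)<\lfloor\frac{g-1}2\rfloor$), so it lifts by \Cref{LC Thm 4.2} or \cite{Knutsen2001} to give (a). With these reductions the relevant Clifford bounds are as large as possible, and $\gamma(A)=\lfloor\frac{g-1}2\rfloor+1$ in all but the two ``doubly'' non-computing types $\g{3}{16}$ in genus $18$ and $\g{3}{17}$ in genus $19$, where $\gamma(A)=\lfloor\frac{g-1}2\rfloor+2$.

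For $r=2$ (the types $\g{2}{11},\g{2}{12},\g{2}{13},\g{2}{14}$) I would apply the bounded Strong Donagi--Morrison result of Lelli-Chiesa~\cite{Lelli_Chiesa_2013}, and for $r=3$ (the types $\g{3}{13},\g{3}{14},\g{3}{15},\g{3}{16},\g{3}{17}$) I would apply \Cref{theorem lifting g3ds general}. In each case the work is the finite check that $d$ lies below the minimum of the four bounds with $\gamma=\lfloor\frac{g-1}2\rfloor$ and $\mu\ge 4$. Here $\frac{3}2\gamma+5$ and $\frac{\gamma+g-1}2+4$ are comfortably larger than $d$ in every listed case, and since $\mu\ge 4$ the bound $\frac54\gamma+\frac{\mu+m+9}2$ dominates $\frac54\gamma+\frac{m}2+5$; so the only genuinely binding constraint is $\frac54\gamma+\frac{m}2+5>d$, which reduces to a lower bound on $m$.

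For $r=4$ (the types $\g{4}{17}$ in genus $18$ and $\g{4}{18}$ in genus $19$) direct lifting of a rank-$4$ system is unavailable, so I would instead produce the auxiliary system required by (c). Picking any point $p\in C$ and stripping the base locus of the rank-$3$ linear system $|A(-p)|$ gives a complete basepoint free $\g{3}{d'}$ with $d'\le d-1$; since $\rho(g,3,\cdot)$ is increasing and $\rho(g,3,d-1)<0$, one keeps $\rho(g,3,d')<0$, and because any such $\g{3}{d'}$ has $h^1\ge 2$ it contributes to the Clifford index, forcing $d'-6\ge\gamma(C)=\lfloor\frac{g-1}2\rfloor$. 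Thus $d'$ lands in a narrow window: either $d'-6=\lfloor\frac{g-1}2\rfloor$, so the $\g{3}{d'}$ computes $\gamma(C)$ and lifts by \Cref{LC Thm 4.2} to give (a); or $\g{3}{d'}$ is one of the non-computing types already treated in the $r=3$ step, giving (b). This is exactly the content of (c).

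The main obstacle is the small-$m$ regime. The only unresolved constraint above is $\frac54\gamma+\frac m2+5>d$, which fails precisely when $m$ is small, i.e. when $S$ carries an elliptic pencil ($m=0$) or, for $\g{3}{16}$ in genus $18$, even a genus-$2$ curve ($m=2$); these are genuine failures of \Cref{theorem lifting g3ds general}, not marginal edge cases. Here I would argue directly from the low-genus curve $D$ on $S$: its restriction $\OO_S(D)|_C$ is a pencil (or low-rank system) whose Clifford index bounds $\gamma(C)$ from above, so that if $D.H$ is small then $\gamma(C)<\lfloor\frac{g-1}2\rfloor$ and we are back in the reduced case (a); whereas if $D.H$ is large one must show the sublattice $\langle H,D\rangle$---equivalently the Donagi--Morrison lift $H-D$ of the elliptic-curve alternative in \Cref{LC prop 5.1}---is itself a Brill--Noether special marking via the Hodge index theorem. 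Controlling this dichotomy uniformly across the finitely many remaining triples $(g,d,m)$ is the crux of the argument.
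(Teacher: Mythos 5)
Your skeleton matches the paper's: case on $r(A)$, quote \cite{Lelli_Chiesa_2013} for $r=2$, feed the $r=3$ types into \Cref{theorem lifting g3ds general} together with \Cref{lemma large slope or BN special}, and reduce $r=4$ to $r=3$ by removing a non-basepoint and then the base locus. You also correctly isolate the single binding numerical constraint $d<\frac{5}{4}\gamma+\frac{m}{2}+5$ and correctly observe that it fails outright when $m$ is small. But that is exactly where your argument stops being a proof: you label the small-$m$ regime ``the crux'' and leave it open, and the workaround you sketch cannot close it. On an elliptic K3 ($m=0$) the pencil $\Sigma$ can have $\Sigma.C$ in a middle range where $\gamma(\OO_C(\Sigma))=\Sigma.C-2=\floor{\frac{g-1}{2}}$, so $\gamma(C)$ is \emph{not} forced below the generic value, while simultaneously $\rho(g,1,\Sigma.C)\ge 0$, so $\langle H,\Sigma\rangle$ is \emph{not} a Brill--Noether special marking; neither horn of your ``$D.H$ small versus $D.H$ large'' dichotomy applies, yet the bound of \Cref{theorem lifting g3ds general} fails there (e.g.\ for a $\g{3}{15}$ in genus $18$, where the bound reads $15<15$).

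The paper closes this gap by a different mechanism, which is the actual content of the lemma in the $r=3$ case: rather than treating $m$ as an immovable invariant of $S$, it re-enters the proof of \cite[Lemmas 4.6 and 4.7]{haburcak_2022} and observes that the subcase in which $m=0$ is used corresponds to a stable quotient $M_1$ that is a gLM bundle of type (II) with $c_1(M_1)^2=0$; by \Cref{bounded c_2} this forces $c_2(M_1)=0$, contradicting $c_2(M_1)\ge 2$ coming from stability. Hence that subcase is vacuous and the $m$-dependent bound is not actually binding in the problematic regime, with \Cref{lemma large slope or BN special} disposing of small $\mu$ (alternative (a)). Without this step --- or some substitute for it --- your proof establishes the lemma only for surfaces with $m$ large enough, which excludes precisely the surfaces (those carrying elliptic pencils or low-genus curves) where the statement is hardest.
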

\begin{proof}	
	We argue by $r(A)$. In particular, since $A$ is non-computing, we have $r(A)\ge 2$.
	
	If $A$ is of type $\g{2}{d}$, we are always in case (b), the proof follows from \cite{Lelli_Chiesa_2013}.
	
	If $A$ is of type $\g{3}{d}$, we apply \Cref{theorem lifting g3ds general} and \Cref{lemma large slope or BN special}. In the proof of \cite[Lemma 4.7]{haburcak_2022}, we cannot have $m=0$, as then $c_1(M_1)^2=0$ in \cite[Lemma 4.6]{haburcak_2022}, and $M_1$ is a gLM of type (II) and if $c_1(M_1)^2=0$, then $c_2(M_1)=0$, which is not the case as $M_1$ is stable and thus has $c_2(M_1)\ge 2$, as stated before the proof of \cite[Lemma 4.6]{haburcak_2022}. Likewise, by \Cref{lemma large slope or BN special}, we see that either $(S,H)$ is Brill--Noether special and we are in case (a) or else $\mu$ in \Cref{theorem lifting g3ds general} may be assumed large and we are in case (b).
	
	If $A$ is of type $\g{4}{d}$ as in genus $18$ or $19$, we show we are in case (c). In genus $18$, if $A$ is of type $\g{4}{17}$, then $C$ has a $\g{3}{16}$ by subtracting a non-basepoint from the $\g{4}{17}$, see \cite{Farkas_2001,Lelli-Chiesa_the_gieseker_petri_divisor_g_le_13}. If the $\g{3}{16}$ is not basepoint free, then subtracting basepoints yields the result. Likewise in genus $19$, if $A$ is of type $\g{4}{18}$, then $C$ has a basepoint free $\g{3}{d^\prime}$ with $d^\prime \le 17$. Applying the above argument to the $\g{3}{d^\prime}$ yields the last statement of case (c).
\end{proof}	

\begin{remark}\label{remark obtaining bp free g^3_d}
	The argument above of obtaining a $\g{3}{d^\prime}$ by subtracting a non-basepoint from a $\g{4}{d}$ is the trivial containment of Brill--Noether loci $\mathcal{M}^r_{g,d}\subseteq \mathcal{M}^{r-1}_{g,d-1}$ when $\rho(g,r-1,d-1)<0$, see \cite{Farkas_2001,Lelli-Chiesa_the_gieseker_petri_divisor_g_le_13}. This containment no longer applies in genus $\ge 20$ when the $\g{3}{d-1}$ is no longer Brill--Noether special, i.e. when $\mathcal{M}^4_{g,d}$ is expected maximal ($\rho(g,4,d)<0$, $\rho(g,4,d+1)\ge 0$, and $\rho(g,3,d-1)\ge 0$). For details on expected maximal Brill--Noether loci, see \cite{haburcak_2022}.
\end{remark}

\begin{theorem}
\label{Theorem bn special k3s in genus 14-19}
Let $(S,H)$ be a polarized K3 surface of genus $14\le g \le 19$ and
$C\in|H|$ a smooth irreducible curve. Then $C$ is Brill--Noether
special if and only if $(S,H)$ is Brill--Noether special.
\end{theorem}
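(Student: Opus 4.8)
The plan is to prove the two implications separately, with essentially all the work in the reverse direction. The forward implication---$(S,H)$ Brill--Noether special $\Rightarrow$ $C$ Brill--Noether special---is immediate and already recorded in the introduction: if $L\neq H$ is nontrivial with $h^0(S,L)\,h^0(S,H-L)\ge g+1$, then $L|_C$ is a Brill--Noether special line bundle on $C$. The substance is the converse, which is precisely \Cref{Mukai Conj} for $14\le g\le 19$. For it I would start from a Brill--Noether special $\g{r}{d}$ $A$ on $C$. Since $\rho$ is unchanged under $A\mapsto\omega_C\otimes A^\vee$, I may assume $d\le g-1$, and after stripping base points I may take $A$ complete and basepoint free; being Brill--Noether special forces $h^0(C,A),h^1(C,A)\ge 2$, so $\gamma(A)\ge\gamma(C)$. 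The argument then splits according to whether $A$ computes the Clifford index of $C$.

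If $\gamma(A)=\gamma(C)$, I would apply the lifting theorems of Lelli-Chiesa (\Cref{LC Thm 4.2}) and Knutsen \cite{Knutsen2001} to produce a line bundle $L\in\Pic(S)$ adapted to $|H|$ with $L|_C\cong A$; then $\langle H,L\rangle$ is a Brill--Noether special marking and $(S,H)$ is Brill--Noether special. The hyperelliptic, trigonal, and exceptional elliptic/genus-$2$ configurations excluded from \Cref{LC Thm 4.2} are absorbed by Knutsen's lemma. The same lifting input handles curves of sub-maximal Clifford index $\gamma(C)<\floor{\frac{g-1}{2}}$, where a Brill--Noether special system computing $\gamma(C)$ is available, so the genuinely new behavior is confined to $\gamma(C)=\floor{\frac{g-1}{2}}$ with $\gamma(A)>\gamma(C)$.

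The heart of the matter is this non-computing case $\gamma(A)>\floor{\frac{g-1}{2}}$, which can only occur for $g\ge 14$ and in this range involves exactly the tabulated $\g{r}{d}$. Reducing to $A$ primitive, I would feed it into \Cref{Lemma strong DM holds}. In case (a) $(S,H)$ is already Brill--Noether special; in case (b) \Cref{conj Strong DM} holds for $A$, so \Cref{Theorem strong DM implies Mukai} shows $\langle H,\det(E_{C,A}/N)\rangle$ is a Brill--Noether special marking; and in case (c)---only the $\g{4}{d}$ in genus $18,19$---one passes to a basepoint free complete $\g{3}{d'}$ with $d'\le d-1$ satisfying (a) or (b), and concludes as before. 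Every branch equips $\Pic(S)$ with a Brill--Noether special marking, as desired.

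I expect the genuine obstacle to be the rank-$4$ systems in genus $18$ and $19$. The Lazarsfeld--Mukai machinery of \Cref{theorem lifting g3ds general} lifts linear systems of rank at most $3$, so a $\g{4}{d}$ cannot be handled head-on. The workaround, packaged in case (c) of \Cref{Lemma strong DM holds} via the containment $\mathcal{M}^4_{g,d}\subseteq\mathcal{M}^3_{g,d-1}$ of \Cref{remark obtaining bp free g^3_d}, is to subtract a non-base-point and lift the resulting still Brill--Noether special $\g{3}{d-1}$; here I would have to check that this $\g{3}{d'}$ is complete and basepoint free and that its degree falls within the numerical bounds of \Cref{theorem lifting g3ds general}. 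A secondary nuisance is that passing to basepoint free or primitive representatives can drop the Clifford index and migrate a non-computing bundle into the computing regime---but this only helps, since the computing regime is already resolved by the lifting theorems.
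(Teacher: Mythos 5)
Your proposal is correct and follows essentially the same route as the paper: the same reduction by Clifford index to the non-computing case, the same use of the Lelli-Chiesa and Knutsen lifting results when $\gamma(A)=\gamma(C)$ or $\gamma(C)<\floor{\frac{g-1}{2}}$, the same reduction to a primitive $A$ and appeal to \Cref{Lemma strong DM holds}, and the same rank-$4$ workaround via a complete basepoint free $\g{3}{d'}$. Your invocation of \Cref{Theorem strong DM implies Mukai} in case (b) is just a packaged form of the paper's explicit enumeration of Donagi--Morrison lifts constrained by \Cref{prop dm lift restrictions if quotient is stable}, \Cref{lemma gamma(E)=gamma(A)-gamma(C)}, and \Cref{cor gamma(E) bounded when BN special}, so the content is the same.
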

\begin{proof}
If $(S,H)$ is Brill--Noether special, with a Brill--Noether special
marking $\langle H,L \rangle$, then taking $L\vert_C$ shows that $C$
is Brill--Noether special.
	
	Conversely, suppose that $C$ is Brill--Noether special and has a line bundle $A$ with $\rho(C,A)<0$. We argue by the Clifford index of $C$. 
	
	If $\gamma(C)<\lfloor\frac{g-1}{2}\rfloor$, or if $\gamma(C)=\lfloor\frac{g-1}{2}\rfloor$ and $\gamma(A)=\gamma(C)$, then the lifting results of Lelli-Chiesa \cite[Theorem 4.2]{Lelli_Chiesa_2015} and Knutsen \cite[Lemma 8.3]{Knutsen2001} give a Brill--Noether special marking on $\Pic(S)$.
	
	Now suppose that $\gamma(C)=\lfloor\frac{g-1}{2}\rfloor$, and $\gamma(A)>\gamma(C)$, that is, $A$ is non-computing. We may assume $A$ is primitive, as otherwise subtracting basepoints from $A$ or $\omega_C\otimes A^\vee$ would give a Brill--Noether special line bundle with lower Clifford index. If $A$ is of type $\g{4}{d}$, we may assume that the $\g{3}{d^\prime}$ obtained from $A$ is complete as else it would have Clifford index lower than $A$. Enumerating all of the Donagi--Morrison lifts of $A$ (or of a $\g{3}{d^\prime}$ obtained from $A$) constrained by \Cref{prop dm lift restrictions if quotient is stable}, \Cref{lemma gamma(E)=gamma(A)-gamma(C)}, \Cref{cor gamma(E) bounded when BN special}, and \Cref{Lemma strong DM holds} show that either $(S,H)$ is Brill--Noether special or the Donagi--Morrison lifts of $A$ give a Brill--Noether special marking on $\Pic(S)$, as desired.
\end{proof}

\begin{remark}
	We give an example of how \Cref{prop dm lift restrictions if quotient is stable} is used in the last statement. Let $g=18$, and $C$ have a $\g{4}{17}$. Suppose that \Cref{Lemma strong DM holds}(c) gives a basepoint free $\g{3}{16}$, which we denote by $A$. Let $E$ be the stable quotient of $E_{C,A}$ by a nontrivial line bundle $N$, as in \Cref{coker is gLM}. By \Cref{prop dm lift restrictions if quotient is stable} and \Cref{remark dm lift restrictions if quotient is stable}, we see that $\det(E)$ must be of type $\g{3}{16}$, $\g{4}{17}$, $\g{4}{16}$, $\g{5}{18}$, or $\g{6}{20}$, all of which induce Brill--Noether special markings on $\Pic(S)$. The argument is similar if $A$ is a $\g{3}{d^\prime}$ with $d\le 15$.
\end{remark}


	\providecommand{\bysame}{\leavevmode\hbox to3em{\hrulefill}\thinspace}
\providecommand{\MR}{\relax\ifhmode\unskip\space\fi MR }
\providecommand{\MRhref}[2]{%
	\href{http://www.ams.org/mathscinet-getitem?mr=#1}{#2}
}
\providecommand{\href}[2]{#2}

	\vfill
\end{document}